\newtheorem{thm}{Theorem}[section]
\newtheorem{lma}[thm]{Lemma}
\newtheorem{cor}[thm]{Corollary}
\newtheorem{defn}[thm]{Definition}
\renewcommand{\i}{\mathtt{i}  }
\renewcommand{\j}{\mathtt{j}  }
\renewcommand{\k}{\mathtt{k}  }
\renewcommand{\l}{\mathtt{l}  }
\newcommand{\I}{\mathcal{I}}
\newcommand{\N}{\mathbb{N}}
\renewcommand{\a}{\mathbf{a}}
\renewcommand{\b}{\mathbf{b}}
\renewcommand{\c}{\mathbf{c}}
\newcommand{\R}{\mathbb{R}}
\newcommand{\ud}{\overline{\dim}_{\mathrm{loc}}}
\newcommand{\ld}{\underline{\dim}_{\mathrm{loc}}}
\newcommand{\locd}{\dim_{\mathrm{loc}}}
\title{Ledrappier-Young formulae for a family of nonlinear attractors}
\date{\vspace{-5ex}}
\author{Natalia Jurga \& Lawrence D. Lee }
\begin{document}

\maketitle

\begin{abstract} 
We study a natural class of invariant measures supported on the attractors of a family of nonlinear, non-conformal iterated function systems introduced by Falconer, Fraser and Lee. These are pushforward quasi-Bernoulli measures, a class which includes the well-known class of Gibbs measures for H\"older continuous potentials. We show that these measures are exact dimensional and that their exact dimensions satisfy a Ledrappier-Young formula.

\emph{Mathematics Subject Classification} 2020: primary: 28A80, 37C45.

\emph{Key words and phrases}: Ledrappier-Young formula, quasi-Bernoulli measure, exact dimensional, non-conformal attractor.
\end{abstract}

\section{Exact dimensionality and Ledrappier-Young formulae}

Given an iterated function system (IFS), which in this article will refer to a finite family of uniform contractions $\{S_i: [0,1]^2 \to [0,1]^2\}_{i=1}^N$, it is well-known that there exists a unique, non-empty, compact set $F \subseteq [0,1]^2$ such that $F=\bigcup_{i=1}^N S_i(F)$ which we call the attractor of the IFS. We say that a measure $\mu$ supported on $F$ is invariant (respectively ergodic) if there exists a $\sigma$-invariant (respectively ergodic) measure $m$ on $\Sigma=\{1, \ldots, N\}^\N$ (where $\sigma$ denotes the left shift map) such that $\mu=m \circ \Pi^{-1}$ where $\Pi: \Sigma \to [0,1]^2$ is the canonical coding map defined by $\Pi(i_1,i_2 \ldots)=\lim_{n \to \infty} S_{i_1}\circ \cdots \circ S_{i_n}([0,1]^2)$.

Recall that the (upper and lower) local dimensions of $\mu$ at $x$ are defined as
$$ \ud(x):=\limsup_{r \to 0} \frac{\log(\mu(B(x,r)))}{\log r}  \;\; \textnormal{and} \;\;  \ld(x):=\liminf_{r \to 0} \frac{\log(\mu(B(x,r)))}{\log r}$$
where $B(x,r)$ denotes a ball of radius $r$ centred at $x$. If $\ud(x)=\ld(x)$ we denote the common value by $\locd(x)$ and call it the local dimension of $\mu$ at $x$. If the local dimension exists and is constant for $\mu$ almost all $x$ we say that the measure $\mu$ is exact dimensional and call this constant the exact dimension of $\mu$, which we will denote by $\dim \mu$. In this case, most well-known notions of dimension coincide with the exact dimension of $\mu$.

In the dimension theory of measures, it is a problem of central interest to establish the exact dimensionality of ergodic invariant measures supported on attractors of IFS, and to provide a formula for the exact dimension. In many settings, the exact dimension has been shown to satisfy a formula in terms of Lyapunov exponents, various notions of entropy and the dimensions of projected measures. 

In particular, if the maps $S_i$ are conformal and the IFS satisfies an additional separation condition, it is a classical result that any ergodic invariant measure $\mu$ supported on the attractor $F$ is exact dimensional and its exact dimension is given by its measure-theoretic entropy over the Lyapunov exponent (see e.g. \cite{bedford}). In the substantially more difficult case where no separation condition is assumed, Feng and Hu generalised this classical result by showing that any ergodic invariant measure $\mu$ supported on the attractor $F$ is exact dimensional and its exact dimension is given by the projection entropy over the Lyapunov exponent. In this sense, exact dimensionality is understood in the conformal setting.

On the other hand, the question of whether every ergodic invariant measure supported on the attractor of a non-conformal IFS is exact dimensional is still very much open, and this question has recently received a lot of attention in the particular case where the maps $S_i$ are all affine. Feng \cite{feng} has very recently shown that all ergodic invariant measures supported on the attractors of IFS composed of affine maps are exact dimensional and satisfy a formula in terms of the Lyapunov exponents and conditional entropies. This answered a folklore open question in the fractal community and unified previous partial results obtained in \cite{bk,fjj}. In the non-conformal setting, this formula for the exact dimension of $\mu$ is often called a ``Ledrappier-Young formula'', following the work of Ledrappier and Young on the dimension of invariant measures for $C^2$ diffeomorphisms on compact manifolds \cite{ly1,ly2}.

While Feng's result settles the case of ergodic measures supported on self-affine sets, the more general case of ergodic measures supported on attractors of more general (i.e. nonlinear) non-conformal IFS is still open. In fact, the only result in this direction that the authors are aware of is \cite[Theorem 2.11]{feng}, where Feng and Hu prove exact dimensionality of ergodic invariant measures supported on the attractors of IFSs which can be expressed as the direct product of IFSs composed of $C^1$ maps on $\R$. The fact that there is limited literature concerning the exact-dimensionality of measures supported on general non-conformal attractors reflects the wider challenge of understanding  the dimension theory of nonlinear non-conformal attractors, although this appears to be an area of growing interest \cite{fs, pesin, ffl}.

In this article we consider (pushforward) quasi-Bernoulli measures supported on the attractors of nonlinear, non-conformal IFS which were introduced in \cite{ffl}, and we show that these are exact dimensional and satisfy a Ledrappier-Young formula. In \S 2 we introduce the class of attractors and measures which will be studied and state our main result, Theorem \ref{ly theorem}. In \S 3 we recall some technical results which were proved in \cite{ffl} concerning the contractive properties of the maps in our IFS. \S 4 contains the proof of Theorem \ref{ly theorem}, which adapts an approach used in \cite{fjj}.

\section{Our setting and statement of results}\label{results}

Let $f_{i,x}$, $f_{i,y}$ denote partial derivatives of $f_i$ with respect to $x$ and $y$ respectively and $g_{i,x}$, $g_{i,y}$ denote partial derivatives of $g_i$ with respect to $x$ and $y$ respectively.

We will consider the following family of attractors which were introduced by Falconer, Fraser and Lee \cite[Definitions 1.1 and 3.1]{ffl}. 

\begin{defn}\label{ifsdef}
Suppose $\mathcal{I}$ is a finite index set with $|\mathcal{I}|\geq 2$. For each $i \in \I$ let $S_i:[0, 1]^2\rightarrow [0, 1]^2$ be of the form $S_i(a_1, a_2)= (f_i(a_1), g_i(a_1, a_2))$, where:
\begin{enumerate}
\item  $f_i$ and $g_i$ are $C^{1+\alpha}$ contractions $(\alpha>0)$ on $[0, 1]$ and $[0, 1]^2$ respectively.
\item $\{S_i\}_{i \in \I}$ satisfies the \textbf{strong separation condition} (SSC) : the sets $\lbrace S_{i}(F)\rbrace_{i\in\mathcal{I}}$ are pairwise disjoint. 
\item $\lbrace S_{i}\rbrace_{i\in\mathcal{I}}$ satisfies  the \textbf{domination condition}: for each $i \in \I$
\begin{equation}\label{Domination} 
\inf_{{\bf a}\in[0, 1]^2}|f_{i,x}({\bf a})|>\sup_{{\bf a}\in[0, 1]^2}|g_{i,y}({\bf a})|\geq \inf_{{\bf a}\in[0, 1]^2}|g_{i,y}({\bf a})|\geq d,  
\end{equation}
where $d>0$.
\end{enumerate}
 Let $F$ denote the attractor of $\{S_i\}_{i \in \I}$.
\end{defn}

For $n\in\mathbb{N}$ we write $\I^n$ to denote the set of all sequences of length $n$ over $\mathcal{I}$ and we let $\I^*=\bigcup_{n\geq 1}\mathcal{I}^n$ denote the set of all finite sequences over $\I$. We let $\Sigma=\I^\N$ denote the set of infinite sequences over $\I$ and for $\i=(i_1, i_2,\dots)\in\Sigma$ we write $\i|n= (i_1, i_2,\dots, i_n)\in\I^n$ to denote the restriction of $\i$ to its first $n$ symbols. For $\i=(i_1, i_2,\dots, i_n)\in\I^n$ we write $S_{\i}= S_{i_1}\circ\cdots\circ S_{i_n}$ and we write $[\i]\subseteq\Sigma$ to denote the cylinder set corresponding to $\i$, which is the set of all infinite sequences over $\I$ which begin with $\i$.

\begin{defn}\label{quasibernoulli} 
We say that a measure $m$ on $\Sigma$ is quasi-Bernoulli if there exists some $L>0$ such that for all $\i, \j\in\I^*$
\begin{equation}\label{quasibernoulli1}
L^{-1}m([\i])m([\j])\leq m([\i\j])\leq L m([\i])m([\j]).
\end{equation}
\end{defn}
We will study the pushforward measure $\mu=m \circ \Pi^{-1}$ for a quasi-Bernoulli measure $m$, noting that $\mu$ is supported on $F$. Apart from including the important class of Bernoulli measures, quasi-Bernoulli measures also include the well-known class of Gibbs measures for H\"{o}lder continuous potentials. Furthermore it was shown in \cite{bkm} that this inclusion is strict.

The Shannon-McMillan-Breiman theorem allows us to define the entropy of $\mu$.

\begin{defn}[Entropy]\label{entropy}
There exists a constant $h(\mu) \leq 0$ such that for $m$-almost all $\i \in \Sigma$,
\begin{equation}\label{entropy1}
h(\mu)=\lim_{n \to \infty} \frac{1}{n} \log m([\i|n]).
\end{equation}
We call $h(\mu)$ the entropy of $\mu$.
\end{defn}

Apart from entropy the other key features of the Ledrappier-Young formula are the Lyapunov exponents, which describe the typical contraction rates in different directions. 

Lyapunov exponents are defined in terms of Jacobian matrices of the maps $S_{\i|n}$. We denote the Jacobian matrix of $S_{\i|n}$ at a point $\a\in [0, 1]^2$ by $D_\a S_{\i|n}$. We recall that the singular values of an $n\times n$ matrix $A$ are defined to be the positive square roots of the eigenvalues of $A^TA$, where $A^T$ denotes the transpose of $A$. The Lyapunov exponents are defined in terms of singular values of the matrices $D_\a S_{\i|n}$. For fixed $\a \in [0,1]^2$ and any $n\in\mathbb{N}$ we let $\alpha_1(D_\a S_{\i|n})\geq\alpha_2(D_\a S_{\i|n})$ denote the singular values of $D_\a S_{\i|n}$.  The sub-additive ergodic theorem then allows us to make the following definition. 

\begin{defn}[Lyapunov exponents]\label{lyapunov}
There exist constants $ \chi_2(\mu) \leq \chi_1(\mu)< 0$ such that for $m$ almost all $\i \in \Sigma$,
\begin{equation}\label{lyapunov1}
\chi_1(\mu)=\lim_{n \to \infty}\frac{1}{n} \log \alpha_1\left(D_{\Pi(\sigma^n \i)} S_{\i|n}\right)
\end{equation}
and
\begin{equation}\label{lyapunov2}
\chi_2(\mu)=\lim_{n \to \infty}\frac{1}{n} \log \alpha_2\left(D_{\Pi(\sigma^n \i)} S_{\i|n}\right).
\end{equation}
We call $\chi_1(\mu), \chi_2(\mu)$ the Lyapunov exponents of the system with respect to $\mu$.
\end{defn}

Let $\pi:[0,1]^2 \to [0,1]$ denote projection to the $x$-co-ordinate. Let $\pi(\mu)$ denote the projected measure which is supported on $\pi F$, which is the attractor of the (possibly overlapping) conformal IFS $\{f_i\}_{i \in \I}$ on $[0,1]$. Note that by \cite[Theorem 2.8]{fh}, $\pi(\mu)$ is exact dimensional. We denote its exact dimension by $\dim \pi(\mu)$.

We are now ready to state our main result.

\begin{thm}\label{ly theorem}
Let $\mu$ be a pushforward quasi-Bernoulli measure supported on $F$, where $F$ satisfies Definition \ref{ifsdef}. Then $\mu$ is exact dimensional and moreover its exact dimension $\dim \mu$ satisfies the following Ledrappier-Young formula
\[
\dim \mu = \frac{h(\mu)}{\chi_2(\mu)} + \frac{\chi_2(\mu)-\chi_1(\mu)}{\chi_2(\mu)} \dim \pi(\mu).
\]
\end{thm}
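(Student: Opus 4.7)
The plan is to compute the upper and lower local dimensions of $\mu$ at $\mu$-typical points $x=\Pi(\i)$ directly and show both equal the right-hand side of the claimed formula, following the strategy of \cite{fjj}. Because each $S_i$ has triangular form $(f_i, g_i)$ satisfying the domination condition, the image $S_{\i|n}([0,1]^2)$ is, up to bounded distortion supplied by the estimates of \S 3, a curved rectangle whose side lengths are the singular values of $D S_{\i|n}$. By Definition \ref{lyapunov} these grow like $e^{n\chi_1(\mu)}$ (horizontal, since $|\chi_1(\mu)|\leq|\chi_2(\mu)|$) and $e^{n\chi_2(\mu)}$ (vertical) for $m$-typical $\i$. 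For each small $r>0$ I would introduce the stopping level $n=n(\i,r)$ characterised by $\alpha_2(D_{\Pi(\sigma^n \i)} S_{\i|n}) \asymp r$, which by \eqref{lyapunov2} satisfies $n(\i, r) \sim \log r / \chi_2(\mu)$ as $r \to 0$. At this scale $S_{\i|n}([0,1]^2)$ has vertical side $\asymp r$ and horizontal side $\asymp r^{\chi_1(\mu)/\chi_2(\mu)}$, strictly larger than $r$.

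The crucial observation is that $\{\j \in [\i|n] : \Pi(\j) \in B(x, r)\} = \{\j \in [\i|n] : \Pi(\sigma^n \j) \in S_{\i|n}^{-1}(B(x, r))\}$, via the cocycle relation $\Pi(\j) = S_{\i|n}(\Pi(\sigma^n \j))$. By the bounded-distortion estimates of \S 3, the inverse $S_{\i|n}^{-1}$ expands horizontally by $\asymp r^{-\chi_1(\mu)/\chi_2(\mu)}$ and vertically by $\asymp r^{-1}$, so $S_{\i|n}^{-1}(B(x, r) \cap S_{\i|n}([0,1]^2))$ is essentially a horizontal strip of width $r^{(\chi_2(\mu)-\chi_1(\mu))/\chi_2(\mu)}$ and full vertical extent. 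Applying the quasi-Bernoulli property (Definition \ref{quasibernoulli}) to split off the first $n$ symbols,
\[
m\bigl([\i|n] \cap \Pi^{-1}(B(x, r))\bigr) \asymp m([\i|n]) \cdot \mu(\mathrm{strip}),
\]
and since the strip has full vertical extent, $\mu(\mathrm{strip})$ equals the $\pi(\mu)$-measure of a horizontal interval of length $r^{(\chi_2(\mu)-\chi_1(\mu))/\chi_2(\mu)}$.

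By \eqref{entropy1}, $m([\i|n]) \asymp e^{n h(\mu)} \asymp r^{h(\mu)/\chi_2(\mu)}$, and exact dimensionality of $\pi(\mu)$ \cite[Theorem 2.8]{fh} gives that the $\pi(\mu)$-measure of this interval is $\asymp r^{(\chi_2(\mu)-\chi_1(\mu))\dim \pi(\mu)/\chi_2(\mu)}$. Combining,
\[
\mu(B(x, r)) \asymp r^{h(\mu)/\chi_2(\mu) + (\chi_2(\mu)-\chi_1(\mu))\dim \pi(\mu)/\chi_2(\mu)},
\]
which after rearrangement is exactly the claimed value of $\dim \mu$. To pass from the mass contribution of $[\i|n]$ alone to $\mu(B(x, r))$, I would check that only uniformly boundedly many other level-$n$ cylinders $[\k]$ with $\k \neq \i|n$ meet $B(x, r)$; this follows from the SSC together with the vertical thinness of such cylinders at scale $r$.

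The main obstacle will be making the nonlinear geometry rigorous: the preimage $S_{\i|n}^{-1}(B(x, r))$ is only \emph{approximately} a rectangle, and the bounded-distortion estimates from \S 3 (built on $C^{1+\alpha}$ regularity and domination) are essential to approximate it by a horizontal strip with constants that do not spoil the exponential rate. A further subtlety is that the quasi-Bernoulli property supplies only the multiplicative constant $L$ of \eqref{quasibernoulli1}; I will need to verify that these constants, together with those from distortion, remain uniformly controlled across scales so that they contribute $o(\log r)$ to $\log \mu(B(x,r))$. As is usually the case for Ledrappier--Young formulae, I expect the lower bound $\ld(x)\geq \mathrm{RHS}$ to be the more delicate direction, requiring an argument that the strip-preimage captures essentially all of the ball's mass rather than merely bounding it above.
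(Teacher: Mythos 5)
Your overall strategy coincides with the paper's: stop at the level $n$ where $\alpha_2(D S_{\i|n})\asymp r$, use the quasi-Bernoulli property to factor the mass of the resulting approximate square as $m([\i|n])$ times the $\pi(\mu)$-measure of a projected interval of length $\asymp \alpha_2/\alpha_1$, and invoke the Shannon--McMillan--Breiman theorem and exact dimensionality of $\pi(\mu)$ to identify the exponents. However, there is a genuine gap at the step where you write that exact dimensionality of $\pi(\mu)$ gives $\pi(\mu)(\text{interval})\asymp r^{(\chi_2-\chi_1)\dim\pi(\mu)/\chi_2}$. The interval in question is centred at $\pi(\Pi(\sigma^n\i))$, a point that \emph{moves with} $n$ (hence with $r$). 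Exact dimensionality only gives the local scaling at each \emph{fixed} $\pi(\mu)$-typical point, with no uniformity in the point, so you cannot apply it along a sequence of scales at a sequence of different centres. This is precisely the difficulty the paper's Lemmas \ref{log mu bound} and \ref{subsequence} are built to overcome: Egorov's theorem produces a set $G$ with $m(G)\geq 1/2$ on which the local-dimension convergence for $\pi(\mu)\circ\pi\circ\Pi$ is uniform, and Birkhoff's ergodic theorem shows $\sigma^n\i$ returns to $G$ along a subsequence $(n_k)$ with $n_{k+1}/n_k\to 1$; the ratio condition is what lets you sandwich an arbitrary small $r$ between consecutive good scales without degrading the exponent. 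Without some such device your argument only establishes the formula along scales for which the moving centre happens to be "good", which is not enough to conclude exact dimensionality.

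A secondary issue: for the lower bound on the local dimension you propose to control the ball's mass by showing that only boundedly many level-$n$ cylinders other than $[\i|n]$ meet $B(x,r)$. Even granting this, the quasi-Bernoulli factorisation applied to a neighbouring cylinder $[\k]$ produces the factor $m([\k])$, which need not be comparable to $m([\i|n])$, so "boundedly many cylinders" does not immediately yield the claimed upper bound on $\mu(B(x,r))$. The paper sidesteps this entirely via Lemma \ref{sep-lemma}: the SSC together with the domination and distortion estimates shows distinct level-$n$ cylinders are separated by $\gtrsim\theta\,\alpha_2(D S_{\i|n-1})$, so the ball of radius $\theta\,\alpha_2(D S_{\i|n})$ meets exactly \emph{one} cylinder. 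You should either prove such a quantitative separation statement or explain how to compare the masses of the finitely many intersecting cylinders; as written this step is not complete.
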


\section{Preliminaries}

Since each $f_i$ ($i \in \I$) only depends on the $x$-co-ordinate of a given point, it is easy to see that the Jacobian of each $S_i$ must be lower triangular. Denote the Jacobian by
$$D_\a S_i= \begin{pmatrix} f_{i,x}(\a) &0 \\ g_{i,x}(\a)& g_{i,y}(\a) \end{pmatrix}.$$
It is easy to see by the chain rule for any $\i\in\Sigma$ and $n\in\mathbb{N}$ the Jacobian of $S_{\i|n}$ must also be lower triangular which we will write as
$$D_\a S_{\i|n}= \begin{pmatrix} f_{\i|n,x}(\a) &0 \\ g_{\i|n,x}(\a)& g_{\i|n,y}(\a) \end{pmatrix}.$$
Our IFS has several useful properties, which were established in \cite{ffl}. To begin with we have the following bound, which allows us to control the off-diagonal entry:

\begin{lma}\label{fg bound} \cite[Lemma 3.3]{ffl}
There exists a constant $C>0$ such that for any $\i \in \Sigma$, $n \in \N$ and ${\bf a}, {\bf b}\in[0, 1]^2$,
\begin{equation} \label{g/f}
\frac{|g_{\i|n,x}(\a)|}{|f_{\i|n,x}(\b)|} \leq C.
\end{equation}
\end{lma}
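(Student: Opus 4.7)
The plan is to combine a chain-rule expansion of $g_{\i|n,x}(\a)$ with the domination hypothesis and a bounded-distortion estimate for the one-dimensional IFS $\{f_i\}_{i \in \I}$.

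First I would set $\a^{(j)} := S_{i_{j+1}} \circ \cdots \circ S_{i_n}(\a)$ for $0 \le j \le n$ (so $\a^{(n)} = \a$), and similarly define $\b^{(j)}$. Repeatedly applying the chain rule to the lower-triangular Jacobians produces
\[
g_{\i|n,x}(\a) = \sum_{k=1}^n \Bigl(\prod_{j=1}^{k-1} g_{i_j,y}(\a^{(j)}) \Bigr) g_{i_k,x}(\a^{(k)}) \Bigl(\prod_{j=k+1}^n f_{i_j,x}(\a^{(j)}) \Bigr), \qquad f_{\i|n,x}(\a) = \prod_{j=1}^n f_{i_j,x}(\a^{(j)}).
\]
Dividing and using the triangle inequality,
\[
\frac{|g_{\i|n,x}(\a)|}{|f_{\i|n,x}(\a)|} \le \sum_{k=1}^n \Bigl(\prod_{j=1}^{k-1} \frac{|g_{i_j,y}(\a^{(j)})|}{|f_{i_j,x}(\a^{(j)})|} \Bigr) \frac{|g_{i_k,x}(\a^{(k)})|}{|f_{i_k,x}(\a^{(k)})|}.
\]
By \eqref{Domination} and the finiteness of $\I$ there exists $\rho \in (0,1)$ with $\sup|g_{i,y}| / \inf|f_{i,x}| \le \rho$ for every $i$, and $C^1$-smoothness on the compact square together with the lower bound $|f_{i,x}| \ge d$ gives a constant $K$ with $|g_{i,x}|/|f_{i,x}| \le K$ everywhere. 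The above sum is then dominated by the geometric series $K \sum_{k \ge 1} \rho^{k-1} = K/(1-\rho)$.

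To finish, I would pass from $\a$ to $\b$ in the denominator via a standard bounded-distortion argument. The $C^{1+\alpha}$ regularity of $f_i$ together with $|f_{i,x}| \ge d > 0$ makes $\log|f_{i,x}|$ $\alpha$-H\"older, and since $f_{i_{k+1}} \circ \cdots \circ f_{i_n}$ contracts uniformly at geometric rate $r < 1$, telescoping yields
\[
\Bigl| \log|f_{\i|n,x}(\a)| - \log|f_{\i|n,x}(\b)| \Bigr| \le \sum_{k=1}^n C' |\a^{(k)} - \b^{(k)}|^{\alpha} \le C' \sum_{k=1}^n r^{\alpha(n-k)},
\]
which is bounded uniformly in $\i$ and $n$. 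Hence $|f_{\i|n,x}(\a)|$ and $|f_{\i|n,x}(\b)|$ are comparable up to a universal constant, and combining with the previous paragraph produces the desired $C$.

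I expect the bounded-distortion step to be the main subtlety, as it is the only place where the $C^{1+\alpha}$ hypothesis is actively used; care is needed to verify that the H\"older constant of $\log|f_{i,x}|$ depends only on the $C^{1+\alpha}$-norms of the $f_i$ and on the lower bound $d$, so that the resulting distortion bound is uniform in $\i$ and $n$. Everything else reduces to a routine geometric-series estimate driven by the domination condition.
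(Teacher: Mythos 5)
Your proof is correct. Note that the paper does not actually prove this lemma --- it imports it from \cite[Lemma~3.3]{ffl} --- but your argument is the standard one and is sound: the chain-rule expansion of the $(2,1)$ entry of the product of lower-triangular Jacobians, the geometric-series bound $K\sum_k \rho^{k-1}$ coming from the domination condition, and the telescoped H\"older estimate giving bounded distortion of $f_{\i|n,x}$ (which is genuinely needed here, since replacing $\a$ by $\b$ term by term in the denominator would leave an uncontrolled trailing product $\prod_{j>k}|f_{i_j,x}(\a^{(j)})|/|f_{i_j,x}(\b^{(j)})|$; bounding the same-point ratio first and then applying distortion, as you do, is the right order of operations).
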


A consequence of Lemma \ref{fg bound} is that the singular values of the the Jacobian matrices are comparable to their diagonal entries.  

\begin{lma}\label{Singular Value Bound}\cite[Lemma 3.4]{ffl}
There exists a constant $M\geq1$ such that for all ${\bf a}\in[0, 1]^2$, $\i\in\Sigma$ and $n\in\mathbb{N}$ the singular values of the Jacobian matrices $D_{{\bf a}}S_{\i|n}$ satisfy
\begin{equation}\label{comparability}
M^{-1} \ \leq\ \frac{\alpha_1\left(D_{{\bf a}}S_{\i|n}\right)}{|f_{\i|n, x}({\bf a})|},\  \frac{\alpha_2\left(D_{{\bf a}}S_{\i|n}\right)}{|g_{\i|n, y}({\bf a})|}\ \leq \ M.
\end{equation}
\end{lma}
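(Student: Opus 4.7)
The plan is to exploit the lower-triangular form of $D_{\a}S_{\i|n}$ together with the off-diagonal bound from Lemma \ref{fg bound}. Writing $A := D_{\a}S_{\i|n} = \begin{pmatrix} a & 0 \\ b & c \end{pmatrix}$ with $a = f_{\i|n,x}(\a)$, $b = g_{\i|n,x}(\a)$, $c = g_{\i|n,y}(\a)$, the singular values $\alpha_1 \geq \alpha_2$ of $A$ are entirely determined by the two elementary-symmetric invariants
\[
\alpha_1^2 + \alpha_2^2 = \mathrm{tr}(A^T A) = a^2 + b^2 + c^2, \qquad \alpha_1 \alpha_2 = |\det A| = |ac|,
\]
so the whole statement reduces to controlling $|b|$ and $|c|$ in terms of $|a|$.

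The key input is the bound $|c| \leq |a|$. By the chain rule, and using that a product of lower-triangular matrices is lower-triangular with diagonal entries that multiply, $a$ and $c$ can be written as products of $n$ factors of the form $f_{i_k,x}(\cdot)$ and $g_{i_k,y}(\cdot)$ respectively, each evaluated at some point in $[0,1]^2$. The domination condition (\ref{Domination}) supplies $\sup|g_{i_k,y}| < \inf|f_{i_k,x}|$ over $[0,1]^2$, so factor by factor $|g_{i_k,y}(\cdot)| \leq |f_{i_k,x}(\cdot)|$ \emph{regardless} of the evaluation points; multiplying yields $|c| \leq |a|$. Lemma \ref{fg bound} applied with $\b = \a$ then gives $|b| \leq C|a|$.

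Combining these estimates, $a^2 + b^2 + c^2 \leq (2+C^2)a^2$, while the direct lower bound $\alpha_1 \geq |Ae_1| = \sqrt{a^2+b^2} \geq |a|$ together with the sum-of-squares identity pins $\alpha_1/|a|$ into $[1,\sqrt{2+C^2}]$. The determinant relation $\alpha_2 = |ac|/\alpha_1$ transfers this directly onto $\alpha_2$, giving $\alpha_2/|c| \in [1/\sqrt{2+C^2},\,1]$. Setting $M := \sqrt{2+C^2}$ produces the required uniform constant. There is no serious obstacle here: the lemma is a short algebraic consequence of Lemma \ref{fg bound} and domination, and the only mildly subtle point is that domination must be applied to factors evaluated at different points along the composition, which the uniform inf/sup in (\ref{Domination}) takes care of automatically.
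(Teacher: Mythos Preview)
Your argument is correct. The paper does not actually prove this lemma; it simply cites \cite[Lemma 3.4]{ffl} and moves on, so there is no ``paper's own proof'' to compare against here.

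As a self-contained proof, your approach works cleanly: the trace and determinant identities $\alpha_1^2+\alpha_2^2=a^2+b^2+c^2$ and $\alpha_1\alpha_2=|ac|$ reduce everything to the two inputs $|c|\leq|a|$ (from domination, applied factor-by-factor via the chain rule) and $|b|\leq C|a|$ (from Lemma~\ref{fg bound}). The lower bound $\alpha_1\geq|Ae_1|=\sqrt{a^2+b^2}\geq|a|$ and the transfer to $\alpha_2$ via $\alpha_2=|ac|/\alpha_1$ are both valid, and the resulting constant $M=\sqrt{2+C^2}$ is explicit. One minor remark: in your factor-by-factor argument the evaluation points for the $f_{i_k,x}$ and $g_{i_k,y}$ factors are in fact the \emph{same} point (namely $S_{i_{k+1}}\circ\cdots\circ S_{i_n}(\a)$), so you do not even need the uniform $\inf/\sup$ form of (\ref{Domination}) there --- though invoking it does no harm.
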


Lemma \ref{Singular Value Bound}, together with the domination condition, implies that the two Lyapunov exponents are distinct, $\chi_2(\mu)< \chi_1(\mu)$.  

Another useful property of our IFS is that the diagonal entries of the Jacobian matrices satisfy a bounded distortion condition.

\begin{lma}\label{boundeddistortion}\cite[Lemma 3.2]{ffl}
There exists a constant $A\geq 1$ such that for all $\i\in\Sigma$, $n\in\mathbb{N}$ and all ${\bf a}, {\bf b}\in[0, 1]^2$,
\begin{equation}\label{ratiofandg}
A^{-1} \ \leq\ \frac{|f_{\i|n,x}({\bf a})|}{|f_{\i|n,x}({\bf b})|},\  \frac{|g_{\i|n,y}({\bf a})|}{|g_{\i|n,y}({\bf b})|}\ \leq \ A.
\end{equation}
\end{lma}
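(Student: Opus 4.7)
The plan is to use $C^{1+\alpha}$ regularity together with the chain rule and the uniform contraction of the $S_i$ to turn the log-ratio into a geometrically convergent telescoping sum. Since the lower-triangular form of the Jacobian combined with the chain rule implies that the diagonal entries of a product are the products of the diagonal entries evaluated along the orbit, I would begin by writing
\begin{equation*}
f_{\i|n, x}(\a) = \prod_{k=1}^n f_{i_k, x}(\a_k), \qquad g_{\i|n, y}(\a) = \prod_{k=1}^n g_{i_k, y}(\a_k),
\end{equation*}
where $\a_k := S_{i_{k+1}} \circ \cdots \circ S_{i_n}(\a)$ for $1\leq k<n$ and $\a_n := \a$, with $\b_k$ defined analogously from $\b$. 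The two halves of the statement (for $f$ and for $g$) can then be proved by the same argument, so I focus on $f$.

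Next, I would observe that since each $f_i$ is $C^{1+\alpha}$ and the domination condition forces $|f_{i,x}| \geq d > 0$ uniformly on $[0,1]^2$, the function $\log|f_{i,x}|$ is $\alpha$-H\"older on $[0,1]^2$ with a constant $K$ that can be chosen uniform over the finite index set $\I$. Taking logarithms of the ratio and telescoping then gives
\begin{equation*}
\left|\log \frac{|f_{\i|n, x}(\a)|}{|f_{\i|n, x}(\b)|}\right| \leq K\sum_{k=1}^n |\a_k - \b_k|^\alpha.
\end{equation*}

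Finally, since the $S_i$ are uniform contractions of $[0,1]^2$ with some common ratio $c < 1$, one has $|\a_k - \b_k| \leq c^{n-k} |\a - \b| \leq c^{n-k}\sqrt 2$, so the right-hand side is bounded by $K (\sqrt 2)^\alpha / (1 - c^\alpha)$, independent of $n, \i, \a, \b$. Exponentiating yields the desired constant $A$. The same argument works verbatim for $g_{\i|n, y}$, using that $|g_{i,y}| \geq d$ by domination. There is no serious obstacle here; the only delicate point is establishing the uniform $\alpha$-H\"older bound on $\log|f_{i,x}|$ and $\log|g_{i,y}|$, which is precisely where both the $C^{1+\alpha}$ regularity and the lower bounds supplied by the domination condition enter.
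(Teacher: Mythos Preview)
Your argument is the standard bounded-distortion proof and is correct as written: the chain-rule factorisation of the diagonal entries, the uniform lower bound $d$ on $|f_{i,x}|$ and $|g_{i,y}|$ supplied by \eqref{Domination}, the $\alpha$-H\"older regularity of the partial derivatives from the $C^{1+\alpha}$ assumption, and the geometric decay of $|\a_k-\b_k|$ from the uniform contraction all combine exactly as you describe. Note that the present paper does not actually prove this lemma but simply quotes it from \cite{ffl}; your proof is essentially the argument one would expect to find there (and is the classical one for bounded distortion in $C^{1+\alpha}$ conformal or triangular systems).
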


Finally, the singular values of the Jacobian matrices also satisfy bounded distortion.

\begin{lma}\label{svboundeddistortion}
There exists a constant $R\geq 1$ such that for all $\i\in\Sigma$, $n\in\mathbb{N}$ and all ${\bf a}, {\bf b}\in[0, 1]^2$,
\begin{equation}\label{ratiofalphas}
R^{-1} \ \leq\ \frac{\alpha_1\left(D_{{\bf a}}S_{\i|n}\right)}{\alpha_1\left(D_{{\bf b}}S_{\i|n}\right)},\  \frac{\alpha_2\left(D_{{\bf a}}S_{\i|n}\right)}{\alpha_2\left(D_{{\bf b}}S_{\i|n}\right)}\ \leq \ R.
\end{equation}
\end{lma}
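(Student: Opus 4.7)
The plan is to prove Lemma \ref{svboundeddistortion} by simply chaining the three previously stated lemmas, since each singular value is comparable to a diagonal entry of the Jacobian, and the diagonal entries already satisfy a bounded distortion estimate.

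First I would write, for any $\i \in \Sigma$, $n \in \N$ and $\a, \b \in [0,1]^2$, the telescoping identity
\[
\frac{\alpha_1\bigl(D_\a S_{\i|n}\bigr)}{\alpha_1\bigl(D_\b S_{\i|n}\bigr)} = \frac{\alpha_1\bigl(D_\a S_{\i|n}\bigr)}{|f_{\i|n,x}(\a)|}\cdot \frac{|f_{\i|n,x}(\a)|}{|f_{\i|n,x}(\b)|}\cdot \frac{|f_{\i|n,x}(\b)|}{\alpha_1\bigl(D_\b S_{\i|n}\bigr)}.
\]
Lemma \ref{Singular Value Bound} bounds the first and third factors in $[M^{-1},M]$, and Lemma \ref{boundeddistortion} bounds the middle factor in $[A^{-1},A]$. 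Multiplying these gives the desired two-sided estimate for $\alpha_1$ with constant $M^2 A$.

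Next I would repeat exactly the same argument for $\alpha_2$, writing an analogous telescoping identity in terms of $|g_{\i|n,y}(\a)|$ and $|g_{\i|n,y}(\b)|$, and applying the $\alpha_2$-part of Lemma \ref{Singular Value Bound} together with the $g_{\i|n,y}$-part of Lemma \ref{boundeddistortion}. Taking $R := M^2 A$ (which is independent of $\i$, $n$, $\a$ and $\b$) then yields both inequalities in \eqref{ratiofalphas}.

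Since all the hard work — the control of the off-diagonal term in Lemma \ref{fg bound}, and the bounded distortion of the diagonal entries in Lemma \ref{boundeddistortion} — has already been done in \cite{ffl}, there is no real obstacle here; the lemma is a one-line consequence of combining previously established estimates. The only thing to be careful about is that the constants $M$ and $A$ depend only on the IFS and not on $\i$, $n$, $\a$ or $\b$, which is exactly the statement of the lemmas being invoked.
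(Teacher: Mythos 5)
Your proof is correct and is exactly the paper's argument: the authors' entire proof reads ``Simply combine Lemma \ref{Singular Value Bound} and Lemma \ref{boundeddistortion}'', and your telescoping identity with the constant $R = M^2 A$ just spells out that combination explicitly.
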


\begin{proof}
Simply combine Lemma \ref{Singular Value Bound} and Lemma \ref{boundeddistortion}. 
\end{proof}

An easy but useful consequence of Lemma \ref{svboundeddistortion} is that the Lyapunov exponents defined in Definition \ref{lyapunov} may be expressed as
\begin{equation} \label{lyap-alt}
\chi_1(\mu)=\lim_{n \to \infty}\frac{1}{n} \log \alpha_1\left(D_{a_n} S_{\i|n}\right) \;\; \textnormal{and} \;\;\chi_2(\mu)=\lim_{n \to \infty}\frac{1}{n} \log \alpha_2\left(D_{a_n} S_{\i|n}\right)\end{equation}
for any sequence $(a_n)_{n\in\mathbb{N}}$ in $[0, 1]^2$, on the same set of $\i\in\Sigma$ of full $m$-measure that was used in Definition \ref{lyapunov}.

\section{Proofs}

The following key lemma allows us to estimate the $\mu$-measure of a small ``approximate square'' in $[0,1]^2$  by the product of the $m$-measure of an appropriate cylinder and the $\pi(\mu)$-measure of the $\pi$-projection of the ``blow up'' of the ``approximate square''. It is worth noting that this lemma is the only place where the assumption that $m$ is quasi-Bernoulli (Definition \ref{quasibernoulli}) is used.

For $r>0$, $n \in \N$, $\a\in[0, 1]^2$ and $\i \in \Sigma$ such that $\Pi(\i)=\a$ we write $B_n(\a, r)$  to denote the strip of points $\b=(b_1, b_2) \in S_{\i|n}([0,1]^2)$ whose $x$ co-ordinate satisfies $|b_1-a_1| \leq r/2$. We note that by the SSC, $\Pi$ is an injective map and therefore $B_n(\a,r)$ is well defined. For $x\in\mathbb{R}$ and $r>0$ we write $Q_1(x, r) = [x-\frac{r}{2}, x+\frac{r}{2}]$.

\begin{lma}\label{mu bound}
Let $r>0$, $n \in \N$, $\a=(a_1, a_2) \in F$ and $\i \in \Sigma$ such that $\Pi(\i)=\a$. Then
\begin{align}
&\mu(B_n(\a, r))\nonumber\\ 
&\leq Lm([\i|n]) \pi(\mu)\left(Q_1\left(\pi(\Pi(\sigma^n \i)), \frac{Mr}{\min_{\b \in [0,1]^2} \alpha_1\left(D_{{\bf b}}S_{\i|n}\right)}\right)\right)\label{mu bound 1}
\end{align}
and
\begin{align}
&\mu(B_n(\a, r))\nonumber\\
&\geq L^{-1}m([\i|n]) \pi(\mu)\left(Q_1\left(\pi(\Pi(\sigma^n \i)), \frac{M^{-1}r}{\max_{\b \in [0,1]^2}\alpha_1\left(D_{{\bf b}}S_{\i|n}\right)}\right)\right)\label{mu bound 2}
\end{align}
where $L$ is the constant from the quasi-Bernoulli property \eqref{quasibernoulli1} and where $M$ is as defined in Lemma \ref{Singular Value Bound}.
\end{lma}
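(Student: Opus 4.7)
The plan is to rewrite $\mu(B_n(\a,r))$ as $m(\Pi^{-1}(B_n(\a,r)))$ and exploit the SSC plus the quasi-Bernoulli property to turn it into a product of $m([\i|n])$ and a projected-measure term. Because the cylinders $\{S_{\j}([0,1]^2)\}_{\j \in \I^n}$ are pairwise disjoint, any $\j \in \Pi^{-1}(B_n(\a,r))$ must satisfy $\j|n = \i|n$, so we may write $\j = \i|n \cdot \k$ for some $\k \in \Sigma$, and the problem reduces to identifying the set of admissible $\k$'s.

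Next I would translate the ``$x$-coordinate'' condition defining $B_n(\a,r)$ into a condition on $\pi(\Pi(\k))$. Since $f_i$ depends only on the first coordinate, by induction $\pi \circ S_{\i|n}$ is a function of $\pi(\b)$ alone, call it $f_{\i|n}(\pi(\b))$, and its derivative is precisely $f_{\i|n,x}$. The defining inequality $|\pi(S_{\i|n}(\Pi(\k))) - a_1| \leq r/2$ becomes, via the mean value theorem applied to $f_{\i|n}$,
\[
|f_{\i|n,x}(c)| \cdot |\pi(\Pi(\k)) - \pi(\Pi(\sigma^n\i))| \leq r/2
\]
for some $c$ between $\pi(\Pi(\k))$ and $\pi(\Pi(\sigma^n\i))$. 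Bounding $|f_{\i|n,x}(c)|$ from below by $M^{-1}\min_\b \alpha_1(D_\b S_{\i|n})$ and from above by $M\max_\b \alpha_1(D_\b S_{\i|n})$ via Lemma \ref{Singular Value Bound} yields the two nested inclusions
\[
\{\i|n \cdot \k : \pi(\Pi(\k)) \in Q_1^-\} \;\subseteq\; \Pi^{-1}(B_n(\a,r)) \;\subseteq\; \{\i|n \cdot \k : \pi(\Pi(\k)) \in Q_1^+\},
\]
where $Q_1^\pm$ are the intervals appearing in \eqref{mu bound 2} and \eqref{mu bound 1} respectively, centred at $\pi(\Pi(\sigma^n\i))$.

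The final step is the quasi-Bernoulli passage. The inequality \eqref{quasibernoulli1} gives $L^{-1}m([\i|n])m([\k|p]) \leq m([\i|n\cdot\k|p]) \leq Lm([\i|n])m([\k|p])$ for every finite word $\k|p$; by a standard monotone class / approximation argument this extends to arbitrary Borel sets $A \subseteq \Sigma$, yielding $L^{-1}m([\i|n])m(A) \leq m(\{\i|n \cdot \k : \k \in A\}) \leq L m([\i|n]) m(A)$. Applying this with $A = \{\k : \pi(\Pi(\k)) \in Q_1^\pm\} = (\pi\circ\Pi)^{-1}(Q_1^\pm)$ and recognising $m((\pi\circ\Pi)^{-1}(Q_1^\pm)) = \pi(\mu)(Q_1^\pm)$ gives the two bounds.

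The only real subtlety I anticipate is the extension of the quasi-Bernoulli inequality from cylinder sets to arbitrary Borel subsets of $\Sigma$; this is technically routine (cylinders generate the Borel $\sigma$-algebra and form a $\pi$-system, so one applies the monotone class theorem or outer-regularity) but must be stated carefully because it is precisely the one place in the proof where the hypothesis on $m$ is used. The MVT/singular-value manipulation and the conversion between $m$ on $\Sigma$ and $\pi(\mu)$ on $[0,1]$ are otherwise direct.
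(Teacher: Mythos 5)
Your proposal is correct and shares the essential skeleton of the paper's argument: restrict attention to the cylinder $[\i|n]$, factor out $m([\i|n])$ via the quasi-Bernoulli property, and convert the $x$-coordinate condition defining $B_n(\a,r)$ into an interval around $\pi(\Pi(\sigma^n\i))$ using the mean value theorem together with Lemma \ref{Singular Value Bound}. The one genuine point of divergence is how the quasi-Bernoulli inequality is deployed. The paper introduces a stopping-time family $\mathcal{J}$ of words $\j$ whose cylinders $S_{\i|n\j}([0,1]^2)$ tile $B_n(\a,r)$ exactly, so that \eqref{quasibernoulli1} is applied only to genuine cylinder sets, term by term, and the remaining sum $\sum_{\j\in\mathcal{J}}m([\j])$ is identified \emph{exactly} with $\pi(\mu)\bigl(\pi S_{\i|n}^{-1}(B_n(\a,r))\bigr)$ before the interval is sandwiched between the two $Q_1$'s. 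You instead sandwich at the level of subsets of $\Sigma$ and then invoke an extension of \eqref{quasibernoulli1} from cylinders to arbitrary Borel sets. That extension is indeed valid (compare the measure $A\mapsto m(\i|n\cdot A)$ with $m$ on the algebra generated by cylinders and use regularity/outer approximation by open sets, which in $\Sigma$ are countable disjoint unions of cylinders), but it is an extra lemma you must actually prove; the paper's choice of $\mathcal{J}$ is precisely the device that avoids it, at the cost of having to observe that the $S_{\j}([0,1]^2)$, $\j\in\mathcal{J}$, exhaust $S_{\i|n}^{-1}(B_n(\a,r))$ in measure. One small caveat: the disjointness you invoke is, under the SSC as stated, disjointness of the sets $S_{\j}(F)$, not of the full images $S_{\j}([0,1]^2)$; what you actually need is that $F\cap B_n(\a,r)$ carries no $m$-mass from cylinders other than $[\i|n]$, an assumption the paper's own proof also makes implicitly, so this is not a defect specific to your argument but is worth stating precisely.
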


\begin{proof}
Let 
\[
\mathcal{J} = \mathcal{J}(\i, n, r) = \left\{\j\in\mathcal{I}^* : S_{\i|n\j}([0,1]^2)\subseteq B_n(\a, r) \textnormal{ and } S_{\i|n\j^{\dagger}}([0,1]^2) \nsubseteq B_n(\a, r)  \right\}
\]
writing $\j^{\dagger}$ to denote $\j$ with the last symbol removed. It follows that
\[
\mu(B_n(\a, r)) = \sum_{\j\in\mathcal{J}}m([\i|n\j])
\]
and as $m$ is quasi-Bernoulli (Definition \ref{quasibernoulli}) we get
\begin{equation}\label{measureequation}
L^{-1}m([\i|n])\sum_{\j\in\mathcal{J}}m([\j])\leq\mu(B_n(\a, r))\leq Lm([\i|n])\sum_{\j\in\mathcal{J}}m([\j]).
\end{equation}
Note that the sets $\{S_{\j}([0, 1]^2)\}_{\j\in\mathcal{J}}$ are disjoint and exhaust $S_{\i|n}^{-1}(B_n(\a, r))$ in measure. Moreover, since $S_{\i|n}^{-1}B_n(\a, r)$ necessarily has height 1 we have
$$\sum_{\j\in\mathcal{J}}m([\j])=\mu(S_{\i|n}^{-1}B_n(\a, r))=\pi(\mu)(\pi S_{\i|n}^{-1}(B_n(\a, r))).$$
Observe that $\pi S_{\i|n}^{-1}(\a)=\pi(\Pi(\sigma^n(\i)))$. Writing $\hat{a}$ and $\hat{b}$ for the left and right endpoints of $\pi S_{\i|n}^{-1}(B_n(\a, r))$, it follows from the mean value theorem that
\[
|\pi(\Pi(\sigma^n \i))-\hat{a}| = \frac{|a_1-f_{\i|n}(\hat{a})|}{|f_{\i|n, x}(\hat{c}_1)|} = \frac{r}{2|f_{\i|n, x}(\hat{c}_1)|}
\]
for some $\hat{c}_1\in [0, 1]$ and 
\[
|\pi(\Pi(\sigma^n \i))-\hat{b}| = \frac{|a_1-f_{\i|n}(\hat{b})|}{|f_{\i|n, x}(\hat{c}_2)|} = \frac{r}{2|f_{\i|n, x}(\hat{c}_2)|}
\]
for some $\hat{c}_2\in [0, 1]$. By Lemma \ref{Singular Value Bound}
\begin{eqnarray*}
\max\left\{\frac{r}{2|f_{\i|n, x}(\hat{c}_1)|}, \frac{r}{2|f_{\i|n, x}(\hat{c}_2)|}\right\}&\leq& \frac{r}{2\min_{\b \in [0,1]^2} |f_{\i|n,x}(\b)|}\\
&\leq&\frac{Mr}{2\min_{\b \in [0,1]^2}\alpha_1\left(D_{{\bf b}}S_{\i|n}\right)}
\end{eqnarray*}
and 
\begin{eqnarray*}
\min\left\{\frac{r}{2|f_{\i|n, x}(\hat{c}_1)|}, \frac{r}{2|f_{\i|n, x}(\hat{c}_2)|}\right\}&\geq& \frac{r}{2\max_{\b \in [0,1]^2} |f_{\i|n,x}(\b)|}\\
&\geq& \frac{M^{-1}r}{2\max_{\b \in [0,1]^2}\alpha_1\left(D_{{\bf b}}S_{\i|n}\right)}.
\end{eqnarray*}
Therefore
\[
\sum_{\j\in\mathcal{J}}m([\j])\leq \pi(\mu)\left(Q_1\left(\pi(\Pi(\sigma^n \i)), \frac{Mr}{\min_{\b \in [0,1]^2} \alpha_1\left(D_{{\bf b}}S_{\i|n}\right)}\right)\right)
\]
and
\[
\sum_{\j\in\mathcal{J}}m([\j])\geq\pi(\mu)\left(Q_1\left(\pi(\Pi(\sigma^n \i)), \frac{M^{-1}r}{\max_{\b \in [0,1]^2}\alpha_1\left(D_{{\bf b}}S_{\i}\right)}\right)\right).
\]
Combining this with \eqref{measureequation} completes the proof.

\end{proof}

Recall by  \cite[Theorem 2.8]{fh} that as an ergodic measure on a self-conformal set, $\pi(\mu)$  is exact dimensional with exact dimension $\dim \pi(\mu)$. This informs us how $\pi(\mu)\left(Q_1\left(\pi(\Pi( \i)), \frac{1}{n}\right)\right)$ scales for an $m$-typical point $\i \in \Sigma$, although it does not provide any uniform bounds on the projected measure of this interval. The following lemma guarantees the existence of a set of positive measure on which we can uniformly bound $\pi(\mu)\left(Q_1\left(\pi(\Pi( \i)), \frac{1}{n}\right)\right)$.

\begin{lma}\label{log mu bound}
Let $\dim \pi(\mu)=t$. There exists a set $G\subseteq \Sigma$ with measure $m(G)\geq 1/2$ such that if $\varepsilon>0$, then for all $n$ sufficiently large
\[
\log\pi(\mu)\left(Q_1\left(\pi(\Pi( \i)), \frac{1}{n}\right)\right)\leq(t-\varepsilon)\log \left(\frac{1}{n}\right)
\]
and
\[
\log\pi(\mu)\left(Q_1\left(\pi(\Pi( \i)),\frac{1}{n}\right)\right)\geq(t+\varepsilon)\log \left(\frac{1}{n}\right)
\]
for all $\i \in G$.
\end{lma}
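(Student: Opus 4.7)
The plan is to use exact dimensionality of $\pi(\mu)$ to obtain pointwise convergence of the relevant log--ratios, and then upgrade this to uniform convergence on a large set via Egorov's theorem.

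First I would translate the exact dimensionality statement from $\pi(\mu)$ back to $m$. Since $\pi(\mu) = m \circ (\pi \circ \Pi)^{-1}$ and $\pi(\mu)$ is exact dimensional with dimension $t$, for $\pi(\mu)$-almost every $y$ we have $\lim_{r\to 0} \log \pi(\mu)(B(y,r))/\log r = t$. Pulling this back through $\pi \circ \Pi$, this means that for $m$-almost every $\i \in \Sigma$,
\[
\lim_{r \to 0} \frac{\log \pi(\mu)(B(\pi(\Pi(\i)), r))}{\log r} = t.
\]
Since $Q_1(x, 1/n) = B(x, 1/(2n))$, specialising to this sequence of radii and observing that $\log(1/(2n))/\log(1/n) \to 1$ gives, for $m$-almost every $\i$,
\[
\phi_n(\i) := \frac{\log \pi(\mu)\bigl(Q_1(\pi(\Pi(\i)), 1/n)\bigr)}{\log(1/n)} \xrightarrow[n \to \infty]{} t.
\]

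Now I would apply Egorov's theorem to the sequence of measurable functions $\phi_n : \Sigma \to \mathbb{R}$, which converges $m$-a.e. to the constant $t$ on the probability space $(\Sigma, m)$. Taking Egorov's parameter to be $1/2$ produces a measurable set $G \subseteq \Sigma$ with $m(G) \geq 1/2$ on which $\phi_n \to t$ uniformly. Consequently, for any given $\varepsilon > 0$, there exists $N$ such that for every $n \geq N$ and every $\i \in G$,
\[
\bigl|\phi_n(\i) - t\bigr| \leq \varepsilon.
\]
Since $\log(1/n) < 0$, multiplying through by $\log(1/n)$ flips the inequalities and yields exactly the two bounds claimed in the statement.

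The argument is essentially just Egorov applied to a suitable sequence of functions, so the only thing requiring care is the reduction: verifying that exact dimensionality in the $r \to 0$ sense translates, along the subsequence $r = 1/(2n)$ and with the $Q_1$ notation, into $\phi_n \to t$ pointwise $m$-a.e. Once that routine check is done, the measurable selection via Egorov is immediate and the threshold $1/2$ is arbitrary.
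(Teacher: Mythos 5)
Your proposal is correct and is essentially the same as the paper's proof: the authors define $f_n(\i)=\log\pi(\mu)\left(Q_1\left(\pi(\Pi(\i)),\tfrac{1}{n}\right)\right)/(-\log n)$, invoke exact dimensionality of $\pi(\mu)$ for pointwise $m$-a.e.\ convergence to $t$, and apply Egorov's theorem to obtain $G$ with $m(G)\geq 1/2$. Your extra remark about the harmless factor $\log(1/(2n))/\log(1/n)\to 1$ is a routine detail the paper elides.
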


\begin{proof}
Define $f_n:\Sigma \rightarrow\mathbb{R}$ by
\[
f_n(\i)= \frac{\log\pi(\mu)\left(Q_1\left(\pi(\Pi( \i)),\frac{1}{n}\right)\right)}{-\log n }.
\]
Therefore for $m$ almost all $\i$
\[
\lim_{n\rightarrow\infty}f_n(\i)=\lim_{n\rightarrow\infty}\frac{\log\pi(\mu)\left(Q_1\left(\pi(\Pi( \i)), \frac{1}{n}\right)\right)}{-\log n}=t
\]
because $\pi(\mu)$ is exact dimensional. By Egorov's Theorem there exists a Borel measurable set $G\subseteq \Sigma$ with $m(G) \geq 1/2$ such that $f_n$ converges uniformly on $G$. In particular, for all $\varepsilon>0$ there exists $N_\varepsilon \in \N$ such that
$$t-\varepsilon \leq \frac{\log\pi(\mu)\left(Q_1\left(\pi(\Pi( \i)), \frac{1}{n}\right)\right)}{-\log n} \leq t+\varepsilon$$
for all $n \geq N_\varepsilon$ and $\i \in G$. 
Rearranging this expression yields the desired result.
\end{proof}

Next we show that for $m$-almost all $\i \in \Sigma$ the sequence of points $\{\sigma^n(\i)\}_{n \in \N}$ regularly visits the set $G$ from Lemma \ref{log mu bound}, yielding uniform bounds on the projected measure of the intervals that appear in \eqref{mu bound 1} and \eqref{mu bound 2} along a subsequence of $n \in \N$.

\begin{lma}\label{subsequence}
Let $\dim \pi(\mu)=t$ and for each $\i \in \Sigma$ let $(r_n(\i))_{n \in \N}$ be a positive null sequence such that $r_n(\i) \to 0$ uniformly over all $\i \in \Sigma$.  Then for $m$-almost all $\i\in \Sigma$ there exists a sequence $\{n_k\}_{k\in\mathbb{N}}$ such that for all $\varepsilon>0$
\begin{equation}\label{subsequence1}
\log\pi(\mu)\left(Q_1\left(\pi(\Pi(\sigma^{n_k}\i)), r_{n_k}(\i)\right)\right)\leq(t-\varepsilon)\log\left(r_{n_k}(\i)\right)
\end{equation}
and
\begin{equation}\label{subsequence2}
\log\pi(\mu)\left(Q_1\left(\pi(\Pi(\sigma^{n_k}\i)),r_{n_k}(\i)\right)\right)\geq(t+\varepsilon)\log\left(r_{n_k}(\i)\right)
\end{equation}
for all sufficiently large $k \in \N$.
Furthermore the sequence $\{n_k\}_{k\in\mathbb{N}}$ can be chosen to satisfy 
\[
\lim_{k\rightarrow\infty}\frac{n_{k+1}}{n_k} = 1.
\]
\end{lma}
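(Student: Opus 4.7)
The plan is to locate the subsequence $(n_k)$ among the return times $\{n : \sigma^n \i \in G\}$, where $G$ is the set from Lemma \ref{log mu bound}. Two ingredients are needed: first, the uniform bounds in Lemma \ref{log mu bound}, stated only for radius $1/n$, must be upgraded to the variable radius $r_n(\i)$; second, the set of return times must be thinned so that $n_{k+1}/n_k \to 1$.

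For the first ingredient, fix $\varepsilon>0$ and apply Lemma \ref{log mu bound} with $\varepsilon/2$ in place of $\varepsilon$ to produce $N \in \N$ such that $\pi(\mu)(Q_1(\pi(\Pi(\i')), 1/n)) \leq n^{-(t-\varepsilon/2)}$ for all $\i' \in G$ and all $n \geq N$. For any $r \in [1/(n+1), 1/n]$ with $n \geq N$, the monotonicity $Q_1(x, r) \subseteq Q_1(x, 1/n)$ yields $\pi(\mu)(Q_1(\pi(\Pi(\i')), r)) \leq n^{-(t-\varepsilon/2)} \leq (2r)^{t-\varepsilon/2}$, and the resulting factor $2^{t-\varepsilon/2}$ can be absorbed into the exponent as soon as $r$ is small enough to satisfy $(t-\varepsilon/2)\log 2 \leq -(\varepsilon/2)\log r$. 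The matching lower bound follows symmetrically from $Q_1(x, r) \supseteq Q_1(x, 1/(n+1))$ together with the lower inequality of Lemma \ref{log mu bound}. Since $r_n(\cdot) \to 0$ uniformly on $\Sigma$, there exists $N^* = N^*(\varepsilon)$ such that for every $\i \in \Sigma$, every $\i' \in G$ and every $n \geq N^*$, both inequalities hold with radius $r_n(\i)$ at the base point $\i'$.

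For the second ingredient, the quasi-Bernoulli property forces $m$ to be $\sigma$-mixing and hence ergodic, so Birkhoff's theorem applied to $\mathbf{1}_G$ gives, for $m$-a.e.\ $\i$, that $A(\i) := \{n \in \N : \sigma^n \i \in G\}$ has lower density at least $m(G) \geq 1/2$. A short counting argument shows that any $A \subseteq \N$ of positive lower density meets every interval $[k, k(1+\lambda)]$ for all $k$ sufficiently large: otherwise, along the sequence of left endpoints $k$ with $A \cap [k, k(1+\lambda)] = \emptyset$, one has $|A \cap [1,k(1+\lambda)]|/(k(1+\lambda)) \to $ something at most $\delta/(1+\lambda) < \delta$, contradicting lower density $\delta$. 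Greedily choose $n_k \in A(\i)$ with $n_1 > N^*$ and $n_{k+1}$ the smallest element of $A(\i)$ in $(n_k, n_k(1+1/k)]$; this is eventually always possible and yields $n_{k+1}/n_k \to 1$. Applying Step one at $\i' = \sigma^{n_k}\i \in G$ with radius $r_{n_k}(\i)$ then delivers (\ref{subsequence1}) and (\ref{subsequence2}).

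The main obstacle is the mismatch between the radius $1/n$ at which Lemma \ref{log mu bound} is uniformly controlled and the variable radius $r_{n_k}(\i)$, which is indexed by the base point $\i$ rather than by the orbit point $\sigma^{n_k}\i$. This is exactly where the monotonicity of $r \mapsto \pi(\mu)(Q_1(\cdot, r))$ and the hypothesis that $r_n \to 0$ uniformly are used; with those two facts combined, the orbit-based return-time argument from Birkhoff together with the density-thinning lemma carries the proof through.
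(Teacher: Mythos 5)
Your first ingredient --- transferring the uniform bounds of Lemma \ref{log mu bound} from radius $1/n$ to the variable radius $r_{n_k}(\i)$ via monotonicity of $r\mapsto\pi(\mu)(Q_1(\cdot,r))$, the sandwich $\tfrac{1}{\ell+1}\leq r\leq\tfrac{1}{\ell}$, and the uniform convergence $r_n(\i)\to 0$ --- is exactly the paper's argument, as is the use of Birkhoff on $\mathbf{1}_G$ to produce the return times. Where you genuinely diverge is the ratio condition $n_{k+1}/n_k\to 1$: you thin the return-time set $A(\i)$ by a greedy selection inside windows $(n_k, n_k(1+1/k)]$, whereas the paper observes that no thinning is needed --- taking $\{n_k\}$ to be \emph{all} return times, the identity $S_{n_{k+1}}=S_{n_k}+1$ for the Birkhoff sums together with $S_{n_k}/n_k\to m(G)\geq 1/2>0$ forces $n_k/n_{k+1}\to 1$ directly. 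The paper's route is shorter and avoids two small problems in yours. First, your counting lemma is false as stated: a set of \emph{lower} density $\delta$ can miss intervals $[k,k(1+\lambda)]$ for arbitrarily large $k$ whenever $\lambda<\tfrac{1}{\delta}-1$ (e.g.\ delete sparse intervals $[m_j,m_j(1+\lambda)]$ from $\N$; the complement has lower density $\tfrac{1}{1+\lambda}>1/2$). Your computation ``$\to$ at most $\delta/(1+\lambda)$'' silently uses that the density \emph{exists} and equals $\delta$, which Birkhoff does give you, so the application is salvageable, but you should state the hypothesis you actually use. Second, with the shrinking window $\lambda=1/k$ the threshold $K(1/k)$ beyond which $A(\i)$ meets every interval $[n,n(1+1/k)]$ may grow faster than $n_k$, so ``eventually always possible'' needs a diagonalization (hold $\lambda$ fixed at $1/j$ until $n_k$ passes $K(1/(j+1))$). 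Finally, your sequence should not start at $n_1>N^*(\varepsilon)$, since the statement requires one sequence working for every $\varepsilon$; start at the first return time and let ``sufficiently large $k$'' absorb the $\varepsilon$-dependence, as the paper does. None of these is fatal, but the paper's observation that the full return sequence already works removes the need for the entire density-thinning step.
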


\begin{proof}
Let $G$ be the set from the statement of Lemma \ref{log mu bound} and consider the characteristic function $\mathbf{1}_{G}$, which is easily seen to be in $L^1(\Sigma)$. We can now apply the Birkhoff Ergodic Theorem to obtain that for $m$-almost all $\i\in\Sigma$
\begin{align*}
\lim_{n\rightarrow\infty}\frac{1}{n}\sum_{j=0}^{n-1}\mathbf{1}_{G}(\sigma^j \i)=\int\mathbf{1}_{G} dm = m(G)\geq 1/2.
\end{align*}
This gives that for $m$-almost all $\i \in \Sigma$, $\sigma^j \i\in G$ with frequency greater than or equal to $1/2$. For each $\i\in\Sigma$ which satisfies this let $\{n_k\}_{k\in\mathbb{N}}$ be the sequence for which $\sigma^{n_k}\i\in G$ for all $k\in\mathbb{N}$. Then by Lemma \ref{log mu bound} for all $\varepsilon>0$ there exists $N_\varepsilon \in \N$ such that
\[
\log\pi(\mu)\left(Q_1\left(\pi(\Pi(\sigma^{n_k}\i)), \frac{1}{n}\right)\right)\leq\left(t-\frac{\varepsilon}{2}\right)\log\left(\frac{1}{n}\right)
\]
and
\[
\log\pi(\mu)\left(Q_1\left(\pi(\Pi(\sigma^{n_k}\i)), \frac{1}{n}\right)\right)\geq\left(t+\frac{\varepsilon}{2}\right)\log\left(\frac{1}{n}\right)
\]
for $n \geq N_\varepsilon$ and all $k \in \N$. Since $r_{n}(\i) \to 0$ uniformly over all $\i \in \Sigma$, we can choose $M_\varepsilon \in \N$ such that $r_n(\i) \leq \frac{1}{N_\varepsilon}$ for all $n \geq M_\varepsilon$. In particular for all $n_k \geq M_\varepsilon$ and $m$-almost all $\i \in \Sigma$ there exists $\ell \geq N_\varepsilon$ such that 
\[
\frac{1}{l+1}\leq r_{n_k}(\i)\leq\frac{1}{l},
\]
which gives
$$\frac{\log\pi(\mu)\left(Q_1\left(\pi(\Pi(\sigma^{n_k}\i)),\frac{1}{\ell}\right)\right)}{\log\left(\frac{1}{\ell}\right) +\log\left(\frac{\ell}{\ell+1}\right)}\leq \frac{\log\pi(\mu)\left(Q_1\left(\pi(\Pi(\sigma^{n_k}\i)),r_{n_k}(\i)\right)\right)}{\log r_{n_k}(\i)} $$
and
$$\frac{\log\pi(\mu)\left(Q_1\left(\pi(\Pi(\sigma^{n_k}\i)),r_{n_k}(\i)\right)\right)}{\log r_{n_k}(\i)} \leq \frac{\log\pi(\mu)\left(Q_1\left(\pi(\Pi(\sigma^{n_k}\i)),\frac{1}{\ell+1}\right)\right)}{\log\left(\frac{1}{\ell+1}\right) +\log\left(\frac{\ell+1}{\ell}\right)}.$$
Hence there exists $N_\varepsilon' \geq M_\varepsilon$ such that for $m$-almost all all $\i$ and all $n_k \geq N_\varepsilon'$,
\[\left|\frac{\log\pi(\mu)\left(Q_1\left(\pi(\Pi(\sigma^{n_k}\i)),r_{n_k}(\i)\right)\right)}{\log r_{n_k}(\i)}-t \right|\leq \varepsilon,\] giving the first part of the result. 

It remains to show that $\lim_{k\rightarrow\infty}n_{k+1}/n_k = 1$. Let $\i$ belong to the set of full $m$-measure for which $\sigma^j \i \in G$ with frequency at least $1/2$ and write
\[
S_{n_k}=\sum_{j=0}^{n_k-1}\mathbf{1}_{G}(\sigma^j \i).
\]
By Birkhoff's Ergodic Theorem $\lim_{k\rightarrow\infty}S_{n_k}/n_k = m(G)\geq 1/2$ and clearly $S_{n_{k+1}}=S_{n_k}+1$. Now note that
\[
\left|\frac{S_{n_k}}{n_k}\left(\frac{n_k}{n_{k+1}}-1\right)+\frac{1}{n_{k+1}} \right| = \left|\frac{S_{n_k}+1}{n_{k+1}}-\frac{S_{n_k}}{n_k}\right|\rightarrow 0
\]
as $k\rightarrow\infty$. As $S_{n_k}/n_k\rightarrow m(G)\geq 1/2$ and $1/n_{k+1}\rightarrow 0$ it follows that $n_k/n_{k+1} \rightarrow 1$ as $k\rightarrow\infty$, completing the result.
\end{proof}

The SSC gives us control over the distance between ``level $n$'' cylinders, as described in the following lemma.

\begin{lma} \label{sep-lemma}
There exists $\theta>0$ such that for any $\i, \l \in \Sigma$ with $\i|n \neq \l|n$,
\begin{equation}\label{sep}
\inf_{\a, \b \in [0,1]^2} d(S_{\i|n}(\a), S_{\l|_n}(\b)) \geq \theta \alpha_2(D_{\Pi(\sigma^{n-1} \i)}S_{\i|_{n-1}})
\end{equation}
where $d$ denotes the standard Euclidean metric.
\end{lma}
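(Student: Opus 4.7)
The plan is to locate the first coordinate $k\in\{1,\dots,n\}$ at which $\i$ and $\l$ disagree, peel off the common prefix $S_{\i|_{k-1}}$, lower-bound the separation of the resulting level-$k$ tails via the SSC, and then use that $S_{\i|_{k-1}}$ cannot collapse this separation by more than the smaller singular value $\alpha_2(DS_{\i|_{k-1}})$. Finally, I will convert the bound naturally sitting at level $k-1$ into one at level $n-1$ using the chain rule together with contractivity of the $g_i$'s; this goes in the correct direction because appending more symbols can only shrink $\alpha_2$.

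For the separation step, write $S_{\i|n}(\a)=S_{\i|_{k-1}}(P)$ and $S_{\l|n}(\b)=S_{\i|_{k-1}}(Q)$ with $P=S_{i_k\cdots i_n}(\a)\in S_{i_k}([0,1]^2)$ and $Q=S_{l_k\cdots l_n}(\b)\in S_{l_k}([0,1]^2)$. Since $i_k\neq l_k$, the SSC yields a universal constant $\theta_0>0$ with $|P-Q|\geq\theta_0$. By the vector-valued mean value theorem,
$$S_{\i|_{k-1}}(P)-S_{\i|_{k-1}}(Q)=\bar{J}(P-Q),\qquad \bar{J}:=\int_0^1 D_{(1-t)Q+tP}S_{\i|_{k-1}}\,dt,$$
and the elementary inequality $|\bar{J}v|\geq\alpha_2(\bar{J})|v|$ gives $d(S_{\i|n}(\a),S_{\l|n}(\b))\geq\theta_0\,\alpha_2(\bar{J})$. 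The matrix $\bar{J}$ is lower triangular; by Lemma \ref{boundeddistortion} its diagonal entries are comparable (up to the constant $A$) to $f_{\i|_{k-1},x}$ and $g_{\i|_{k-1},y}$ evaluated at $\Pi(\sigma^{k-1}\i)$, and Lemma \ref{fg bound} bounds its off-diagonal entry by a universal multiple of its $(1,1)$-entry. A short linear-algebra computation (combine $\alpha_1\alpha_2=|\det|$ with the Frobenius upper bound on $\alpha_1$) then produces $\alpha_2(\bar{J})\gtrsim |g_{\i|_{k-1},y}(\Pi(\sigma^{k-1}\i))|$, which by Lemma \ref{Singular Value Bound} is $\gtrsim\alpha_2(D_{\Pi(\sigma^{k-1}\i)}S_{\i|_{k-1}})$.

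To replace $k-1$ by $n-1$, I apply the chain rule to the $(2,2)$-entry of $DS_{\i|_{n-1}}$, using that the $(2,2)$-entry of a product of lower triangular matrices is the product of $(2,2)$-entries:
$$g_{\i|_{n-1},y}(\a)=g_{\i|_{k-1},y}(S_{i_k\cdots i_{n-1}}(\a))\cdot g_{i_k\cdots i_{n-1},y}(\a).$$
The second factor has modulus at most $1$ by contractivity of the $g_i$, and Lemma \ref{boundeddistortion} bounds the first factor by $A\,|g_{\i|_{k-1},y}(\Pi(\sigma^{k-1}\i))|$. Lemma \ref{Singular Value Bound} then gives $\alpha_2(D_{\Pi(\sigma^{n-1}\i)}S_{\i|_{n-1}})\lesssim \alpha_2(D_{\Pi(\sigma^{k-1}\i)}S_{\i|_{k-1}})$, and chaining this with the previous paragraph produces the required universal $\theta>0$.

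The main obstacle is really the first step: obtaining $\theta_0>0$ uniformly requires interpreting the SSC as giving strict separation of the compact sets $S_i([0,1]^2)$ and not merely of $S_i(F)$ (otherwise, at $n=k$, the image squares could touch and the infimum could vanish). Once this is granted, everything else is a routine combination of Lemmas \ref{fg bound}, \ref{Singular Value Bound}, and \ref{boundeddistortion} with the elementary linear-algebra fact that $\alpha_2$ of a lower triangular $2\times 2$ matrix with bounded off-diagonal ratio is essentially its $(2,2)$-entry.
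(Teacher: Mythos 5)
Your argument is correct, and the middle step is genuinely different from the paper's. Both proofs share the outer structure: peel off the common prefix at the first disagreement index, invoke the SSC to get a uniform separation $\theta_0$ of the two tails, show that applying the common prefix map cannot shrink a displacement by more than (a constant times) $\alpha_2$ of its Jacobian, and finally pass from the prefix level to level $n-1$ using that $\alpha_2$ only decreases (up to constants) as symbols are appended. Where you diverge is in the key contraction estimate $d(S_{\i|_{k-1}}(P),S_{\i|_{k-1}}(Q))\gtrsim \alpha_2(D S_{\i|_{k-1}})\,|P-Q|$: the paper proves this geometrically, inserting the intermediate point $\c=(b_1,a_2)$, establishing the ``foliation bound'' $d(S_\i(\a),S_\i(\b))\gtrsim d(S_\i(\a),S_\i(\c))+d(S_\i(\c),S_\i(\b))$ via the bounded shear of Lemma \ref{fg bound}, and then bounding each piece below by $\inf|g_{\i,y}|$ times the corresponding Euclidean displacement. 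You instead take the integral (averaged-Jacobian) form of the mean value theorem and use $|\bar{J}v|\geq\alpha_2(\bar{J})|v|$, reducing everything to estimating $\alpha_2$ of a single lower-triangular matrix; this is cleaner and avoids the case analysis in the paper's Figure~1 argument. Two small points you should make explicit. First, for $|\bar{J}_{11}|$ and $|\bar{J}_{22}|$ to be comparable to the pointwise values of $|f_{\i|_{k-1},x}|$ and $|g_{\i|_{k-1},y}|$ you need the integrands not to change sign under averaging; this holds because $f_{i,x}$ and $g_{i,y}$ are continuous and nonvanishing (by the domination condition) on connected domains, hence of constant sign, and so are their products. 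Second, your Frobenius bound $\alpha_1(\bar{J})\lesssim|\bar{J}_{11}|$ also needs $|\bar{J}_{22}|\lesssim|\bar{J}_{11}|$, which again follows from domination. Finally, you are right to flag the SSC subtlety: the lemma takes an infimum over $\a,\b\in[0,1]^2$, so one needs separation of the images of the full square rather than of $F$; the paper's own proof quietly relies on the same reading of the hypothesis, so this is not a defect of your argument relative to theirs.
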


\begin{proof}
For notational convenience in this proof we shall sometimes write $a\lesssim b$  to mean that for $a, b\in\mathbb{R}$ we have $a\leq cb$ for some universal constant $c>0$, where $c$ is independent of any variable which $a$ and $b$ depend on.

We begin by showing that for any $a_1, b_1, b_2 \in [0,1]$ with $a_1\neq b_1$ and any $\i \in \I^*$,
\begin{equation} \label{boundedheight}
\frac{|g_\i(b_1,b_2)-g_\i(a_1,b_2)|}{|f_\i(b_1)-f_\i(a_1)|} \leq C
\end{equation}
where $C$ is the constant from \eqref{g/f}. To see this, notice that by the Mean Value Theorem there exist $c_1,c_2 \in (a_1,b_1)$ such that
$$\frac{|g_\i(b_1,b_2)-g_\i(a_1,b_2)|}{|f_\i(b_1)-f_\i(a_1)|}=\frac{|g_{\i,x}(c_1,b_2)||b_1-a_1|}{|f_{\i,x}(c_2)||b_1-a_1|}=\frac{|g_{\i,x}(c_1,b_2)|}{|f_{\i,x}(c_2)|} \leq C,$$
where the final inequality follows by \eqref{g/f}. 

Now, let $\a=(a_1,a_2), \b=(b_1,b_2) \in [0,1]^2$.  Define $\c=(b_1,a_2)$. We will now show that
\begin{equation}
d(S_\i(\a), S_\i(\b)) \gtrsim d(S_\i(\a),S_\i(\c))+d(S_\i(\c),S_\i(\b)), \label{foliation bound}
\end{equation}
where the implied constant is independent of $\i \in \I^*$, $\a$, and $\b$. To see this, we let $\gamma=|f_\i(a_1)-f_\i(b_1)|$, $\varepsilon=|g_\i(\a)-g_\i(\b)|$ and $\eta=|g(\a)-g(\c)|$. Note that $d(S_\i(\a), S_\i(\b))=\sqrt{\gamma^2+\varepsilon^2}$, $d(S_\i(\a),S_\i(\c))=\sqrt{\gamma^2+\eta^2}$. This is displayed visually in Figure \ref{distance figure}.
\begin{figure}[H]
  \centering
\includegraphics[width=\textwidth]{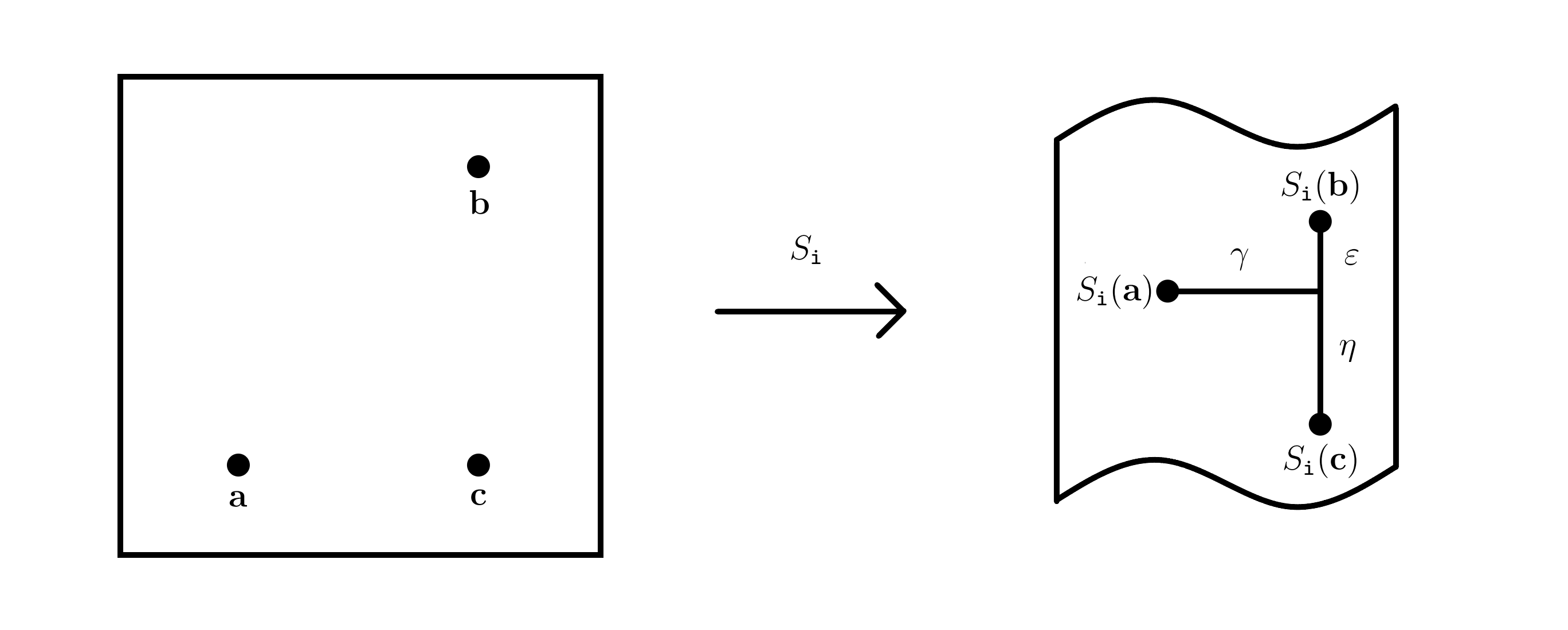}
\caption{The images of the points $\a, \b$ and $\c$ under $S_\i$ and the distances $\gamma, \epsilon$ and $\eta$.} 
\label{distance figure}
\end{figure}
There are now three possibilities: (i) $d(S_\i(\c),S_\i(\b))=\eta+\varepsilon$, (ii) $\eta>\varepsilon$ and $d(S_\i(\c),S_\i(\b))=\eta-\varepsilon$ or (iii) $\varepsilon>\eta$ and $d(S_\i(\c),S_\i(\b))=\varepsilon-\eta$. Hence
\begin{eqnarray}
\frac{d(S_\i(\a),S_\i(\c))+d(S_\i(\c),S_\i(\b))}{d(S_\i(\a), S_\i(\b))}&=& \frac{\sqrt{\gamma^2+\eta^2}+d(S_\i(\a),S_\i(\c))}{\sqrt{\gamma^2+\varepsilon^2}}. \label{quotient}
\end{eqnarray}
In cases (i) and (iii) we can use \eqref{boundedheight} to bound $\eta \lesssim \gamma$, yielding that 
$$\frac{\sqrt{\gamma^2+\eta^2}+d(S_\i(\a),S_\i(\c))}{\sqrt{\gamma^2+\varepsilon^2}} \lesssim \frac{\gamma+\varepsilon}{\sqrt{\gamma^2+\varepsilon^2}} \lesssim 1.$$
whereas in case (ii) we can use $\eta \lesssim \gamma$ to deduce that
$$\frac{\sqrt{\gamma^2+\eta^2}+d(S_\i(\a),S_\i(\c))}{\sqrt{\gamma^2+\varepsilon^2}} \lesssim \frac{\gamma}{\gamma} = 1.$$
This completes the proof of \eqref{foliation bound}.

Now, notice that by the Mean Value Theorem there exists $\c_1 \in [0,1]^2$ such that
\begin{eqnarray*}
d(S_\i(\a), S_\i(\c))^2 &=& f_{\i,x}(\c_1)^2|a_1-b_1|^2+g_{\i,x}(\c_1)^2|a_1-b_1|^2\\
&\geq& d(\a,\c)^2 f_{\i,x}(\c_1)^2 \geq d(\a,\c)^2 \sup_{\c_2 \in [0,1]^2} g_{\i,y}(\c_2)^2
\end{eqnarray*} 
by \eqref{Domination}. Similarly one can check that 
\[
d(S_\i(\b), S_\i(\c))^2 \geq  d(\b,\c)^2\inf_{\c_2 \in [0,1]^2}g_{\i,y}(\c_2)^2.
\] 
Therefore
\begin{eqnarray}
d(S_\i(\a),S_\i(\b)) &\gtrsim & d(S_\i(\a), S_\i(\c))+d(S_\i(\c), S_\i(\b)) \nonumber\\
&\gtrsim & (d(\a,\c)+d(\b,\c))\inf_{\c_2 \in [0,1]^2}|g_{\i,y}(\c_2)| \nonumber\\
&\geq& d(\a,\b)\inf_{\c_2 \in [0,1]^2}|g_{\i,y}(\c_2)|\nonumber \\
&\gtrsim& d(\a,\b)\sup_{\c_3 \in [0,1]^2}\alpha_2(D_{\c_3} S_\i) \label{cyl contr}
\end{eqnarray}
where the first inequality follows by \eqref{foliation bound} and the final one by Lemma \ref{Singular Value Bound}.

Finally, note that by the SSC, there exists $\delta>0$ such that 
\begin{equation}
\min_{i \neq j \in \I} \min_{x \in S_i(F)}\min_{y \in S_j(F)} d(x,y) \geq \delta.
\label{sep} \end{equation}
 Let  $\i=(i_1,i_2, \ldots), \l=(l_1,l_2, \ldots) \in \Sigma$ with $\i|n \neq \l|n$. In particular there exists $0 \leq m \leq n-1$ such that $\i|m=\l|m$ and $i_{m+1} \neq l_{m+1}$. We write $\i|n=\i|m\j$ and $\l|n=\i|m \k$. Then for all $\a, \b \in [0,1]$,
\begin{eqnarray*}
d(S_{\i|n}(\a), S_{\l|n}(\b))&=&d(S_{\i|m}(S_{\j}(\a)), S_{\i|m}(S_\k(\b))) \\
&\gtrsim& d(S_\j(\a),S_\k(\b))\sup_{\c_3 \in [0,1]^2}\alpha_2(D_{\c_3} S_{\i|m}) \\
& \gtrsim& \alpha_2(D_{\Pi(\sigma^{n-1}\i)} S_{\i|n-1})
\end{eqnarray*}
where the second inequality follows by \eqref{cyl contr} and the final inequality follows by \eqref{sep} (since $\j$ and $\k$ begin with different digits) and Lemma \ref{svboundeddistortion}.
\end{proof}

We are now in a position to be able to prove Theorem \ref{ly theorem}, our main result. We do so by establishing both the corresponding lower and upper bounds for the local dimension of $\mu$ at $\Pi(\i)$ for $\i\in\Sigma$ belonging to a set of full $m$-measure. It is worth noting that only the lower bound requires Lemma \ref{sep-lemma} and as such this is the only bound that requires the SSC.

\begin{proof}[Proof of Theorem \ref{ly theorem}]
 Let $\i\in\Sigma$ belong to the set of full $m$-measure for which \eqref{entropy1}, \eqref{lyapunov1} and \eqref{lyapunov2} hold.  Let
\[
\eta:=\sup_{i\in\mathcal{I}, \a, \b\in[0, 1]^2}\left\{\frac{|g_{i,y}(\a)|}{|f_{i,x}(\b)|}\right\}
\]
and note that by the domination condition from Definition \ref{ifsdef} $\eta<1$. Using Lemma \ref{Singular Value Bound}, applying the chain rule to $g_{\i|n,y}(\Pi(\sigma^n \i))$ and $f_{\i|n,x}(\b)$ for each $n\in\mathbb{N}$ and pairing off appropriate terms we get  
\[
\frac{\alpha_2\left(D_{\Pi(\sigma^{n} \i)}S_{\i|n}\right)}{\min_{\b \in [0,1]^2} \alpha_1\left(D_{{\bf b}}S_{\i|n}\right)}\leq\frac{M^2|g_{\i|n,y}(\Pi(\sigma^n \i))|}{\min_{\b \in [0,1]^2} |f_{\i|n,x}(\b)|}\leq M^2\eta^n
\rightarrow 0.
\]
Similarly,
\[
\frac{\alpha_2\left(D_{\Pi(\sigma^{n} \i)}S_{\i|n}\right)}{\max_{\b \in [0,1]^2} \alpha_1\left(D_{{\bf b}}S_{\i|n}\right)}\leq M^2\eta^n\rightarrow 0.
\]
Define the sequences
\begin{equation} \label{sequences}
r_n(\i)=\frac{M \theta\alpha_2\left(D_{\Pi(\sigma^{n} \i)}S_{\i|n}\right)}{\min_{\b \in [0,1]^2} \alpha_1\left(D_{{\bf b}}S_{\i|n}\right)}\; \textnormal{ and } \;  r_n'(\i)=\frac{M^{-1}\alpha_2\left(D_{\Pi(\sigma^{n} \i)}S_{\i|n}\right)}{\max_{\b \in [0,1]^2} \alpha_1\left(D_{{\bf b}}S_{\i|n}\right)},
\end{equation}
and observe that both $r_n(\i)$ and $r_n'(\i)$ converge to 0 uniformly over all $\i \in \Sigma$.
Hence we can also assume that $\i \in \Sigma$ belongs to the set of full measure which satisfies \eqref{subsequence1} and \eqref{subsequence2} for the sequences $r_n(\i)$ and $r_n'(\i)$.

For a given $\a=(a_1,a_2)\in [0,1]^2$ and $r>0$  write 
\[
Q_2(\a, r)=\left(a_1-\frac{r}{2},a_1+\frac{r}{2}\right)\times \left(a_2-\frac{r}{2}, a_2+\frac{r}{2}\right).
\] 
Write $\textbf{x}=\Pi(\i)$, let $n\in\mathbb{N}$ and consider the square $Q_2\left(\textbf{x}, \theta \alpha_2\left(D_{\Pi(\sigma^{n} \i)}S_{\i|n}\right)\right)$. By Lemma \ref{sep-lemma} note that $Q_2\left(\textbf{x}, \theta \alpha_2\left(D_{\Pi(\sigma^{n} \i)}S_{\i|n}\right)\right)$ intersects only the cylinder $S_{\i|n}([0, 1]^2)$, therefore it is easy to see that
\[
Q_2\left(\textbf{x}, \theta\alpha_2\left(D_{\Pi(\sigma^{n} \i)}S_{\i|n}\right)\right)\cap F\subseteq B_n(\textbf{x}, \theta\alpha_2\left(D_{\Pi(\sigma^{n} \i)}S_{\i|n}\right)).
\]
Hence by Lemma \ref{mu bound}, 
\begin{align}
&\mu\left(Q_2\left(\textbf{x}, \theta \alpha_2\left(D_{\Pi(\sigma^{n} \i)}S_{\i|n}\right)\right)\right)\nonumber\\
&\leq Lm([\i|n]) \pi(\mu)\left(Q_1\left(\pi(\Pi(\sigma^n \i)),\frac{M\theta\alpha_2\left(D_{\Pi(\sigma^{n} \i)}S_{\i|n}\right)}{\min_{\b \in [0,1]^2} \alpha_1\left(D_{{\bf b}}S_{\i|n}\right)}\right)\right). \label{mu q bound}
\end{align}
Consider the subsequence $(n_k)_{k \in \N}$ guaranteed by applying Lemma \ref{subsequence} to the sequence $r_n(\i)$.
Applying the chain rule we have
\begin{align*}
\alpha_2\left(D_{\Pi(\sigma^{n+1} \i)}S_{\i|n+1}\right)&=\alpha_2\left(D_{S_{i_{n+1}}\Pi(\sigma^{n+1} \i)}S_{\i|n}D_{\Pi(\sigma^{n+1} \i)}S_{i_{n+1}}\right)\\
&\leq \alpha_2\left(D_{S_{i_{n+1}}\Pi(\sigma^{n+1} \i)}S_{\i|n}\right)\alpha_1\left(D_{\Pi(\sigma^{n+1} \i)}S_{i_{n+1}}\right)\\
&<\alpha_2\left(D_{S_{i_{n+1}}\Pi(\sigma^{n+1} \i)}S_{\i|n}\right)\\
&= \alpha_2\left(D_{\Pi(\sigma^{n} \i)}S_{\i|n}\right),
\end{align*}
where in the first inequality we have used that $\alpha_2(AB) \leq \alpha_2(A)\alpha_1(B)$ for $2 \times 2$ matrices $A$ and $B$. This implies that the null subsequence $\left(\theta \alpha_2\left(D_{\Pi(\sigma^{n_k} \i)}S_{\i|n_k}\right)\right)_{k\in\mathbb{N}}$ is strictly decreasing. Hence for any $r>0$ sufficiently small we can choose $k\in\mathbb{N}$ sufficiently large so that
\begin{equation}\label{k-thing}
\theta  \alpha_2\left(D_{\Pi(\sigma^{n_{k+1}} \i)}S_{\i|n_{k+1}}\right)\leq r< \theta \alpha_2\left(D_{\Pi(\sigma^{n_k} \i)}S_{\i|n_k}\right).
\end{equation}

Let $t=\dim \pi(\mu)$ and $\varepsilon>0$. Let $r>0$ be sufficiently small so that $k \in \N$ that satisfies \eqref{k-thing} is sufficiently large that \eqref{subsequence1} holds for $\varepsilon$. Then, using \eqref{lyap-alt}, \eqref{mu q bound} and \eqref{subsequence1} we get 
\begin{align*}
&\frac{\log \mu\left(Q_2(\textbf{x}, r)\right)}{\log r}\\
&\geq \frac{\log \mu\left(Q_2\left(\textbf{x}, \theta \alpha_2\left(D_{\Pi(\sigma^{n_k} \i)}S_{\i|n_k}\right)\right)\right)}{\log \left(\theta \alpha_2\left(D_{\Pi(\sigma^{n_{k+1}} \i)}S_{\i|_{n_{k+1}}}\right)\right)}\\
&\geq \frac{\log Lm([\i|n_k]) + \log\pi(\mu)\left(Q_1\left(\pi(\Pi(\sigma^{n_k} \i)), \frac{M\theta\alpha_2\left(D_{\Pi(\sigma^{n_k} \i)}S_{\i|n_k}\right)}{\min_{\b \in [0,1]^2} \alpha_1\left(D_{{\bf b}}S_{\i|n_k}\right)}\right)\right)}{\log \left(\theta \alpha_2\left(D_{\Pi(\sigma^{n_{k+1}} \i)}S_{\i|_{n_{k+1}}}\right)\right)}\\
&\geq \frac{\log Lm([\i|n_k]) + (t-\varepsilon)\log\left(\frac{M\theta\alpha_2\left(D_{\Pi(\sigma^{n_k} \i)}S_{\i|n_k}\right)}{\min_{\b \in [0,1]^2} \alpha_1\left(D_{{\bf b}}S_{\i|n_k}\right)}\right)}{\log \left(\theta \alpha_2\left(D_{\Pi(\sigma^{n_{k+1}} \i)}S_{\i|_{n_{k+1}}}\right)\right)}\\
&=\frac{\frac{1}{n_k}\log L+\frac{1}{n_k}\log m([\i|n_k]) +\frac{(t-\varepsilon)}{n_k}\log\left(\frac{\alpha_2\left(D_{\Pi(\sigma^{n_k} \i)}S_{\i|n_k}\right)}{\min_{\b \in [0,1]^2} \alpha_1\left(D_{{\bf b}}S_{\i|n_k}\right)}\right)+\frac{(t-\varepsilon)}{n_k}\log M\theta}{\frac{1}{n_k}\log \theta  +\frac{1}{n_{k+1}}\frac{n_{k+1}}{n_k}\log \left(\alpha_2\left(D_{\Pi(\sigma^{n_{k+1}} \i)}S_{\i|_{n_{k+1}}}\right)\right)}\\
&\rightarrow\frac{h(\mu)+(t-\varepsilon)(\chi_2(\mu)-\chi_1(\mu))}{\chi_2(\mu)}
\end{align*}
as $r\rightarrow 0$ (so $k\rightarrow\infty$). Since $\varepsilon> 0$ was arbitrary, the lower bound is complete.

We now establish the corresponding upper bound. We begin by estimating 
\[
\sup_{(a_1,a_2),(b_1,b_2) \in B_n(\mathbf{x}, \alpha_2(D_{\Pi(\sigma^n(\i))} S_{\i|n})) } |a_2-b_2|.
\]
for each $n\in\mathbb{N}$. For some $a,b \in [0,1]$ with the property that
\begin{equation}\label{f alpha2 bound}
|f_{\i|n}(b)-f_{\i|n}(a)|\leq\alpha_2\left(D_{\Pi(\sigma^{n} \i)}S_{\i|n}\right)
\end{equation}
we can write
\[
\sup_{(a_1,a_2),(b_1,b_2) \in B_n(\mathbf{x}, \alpha_2(D_{\Pi(\sigma^n(\i))} S_{\i|n}))} |a_2-b_2|=|g_{\i|n}(b, 1)-g_{\i|n}(a, 0)|.
\]
Note that
\begin{align*}
|g_{\i|n}(b, 1)-g_{\i|n}(a, 0)|&\leq |g_{\i|n}(b, 1)-g_{\i|n}(a, 1)| + |g_{\i|n}(a, 1)-g_{\i|n}(a, 0)|\\
& \leq C|f_{\i|n}(b)-f_{\i|n}(a)| + |g_{\i|n,y}(\c)|
\end{align*}
for some $\c \in [0,1]^2$ where we have used \eqref{boundedheight} and the mean value theorem. Thus it follows from \eqref{f alpha2 bound} and Lemma \ref{boundeddistortion} that
\begin{align*}
|g_{\i|n}(b, 1)-g_{\i|n}(a, 0)|&\leq C\alpha_2\left(D_{\Pi(\sigma^{n} \i)}S_{\i|n}\right) + A\alpha_2\left(D_{\Pi(\sigma^{n} \i)}S_{\i|n}\right)\\
&=(A+C)\alpha_2\left(D_{\Pi(\sigma^{n} \i)}S_{\i|n}\right).
\end{align*}

It is now easy to see that
\[
B_n(\textbf{x}, \alpha_2\left(D_{\Pi(\sigma^{n} \i)}S_{\i|n}\right))\cap F\subseteq Q_2\left(\textbf{x}, 2(A+C)\alpha_2\left(D_{\Pi(\sigma^{n} \i)}S_{\i|n}\right)\right)\cap F.
\]
Thus Lemma \ref{mu bound} implies
\begin{align}
&\mu\left(Q_2\left(\textbf{x}, 2(A+C)\alpha_2\left(D_{\Pi(\sigma^{n} \i)}S_{\i|n}\right)\right)\right)\nonumber\\
&\geq L^{-1}m([\i|n]) \pi(\mu)\left(Q_1\left(\pi(\Pi(\sigma^{n} \i)), \frac{M^{-1}\alpha_2\left(D_{\Pi(\sigma^{n} \i)}S_{\i|n}\right)}{\max_{\b \in [0,1]^2} \alpha_1\left(D_{{\bf b}}S_{\i|n}\right)}\right)\right)\label{mu q bound 2}. 
\end{align}
Consider the subsequence $(n_k)_{k \in \N}$ guaranteed by applying Lemma \ref{subsequence} for $r_n'(\i)$, which was defined in \eqref{sequences}. Since $\left(\alpha_2\left(D_{\Pi(\sigma^{n} \i)}S_{\i|n}\right)\right)_{n\in\mathbb{N}}$ is strictly decreasing and null, for any $r>0$ sufficiently small we can choose $k\in\mathbb{N}$ sufficiently large so that
\begin{equation} \label{k-thing2}
2(A+C)\alpha_2\left(D_{\Pi(\sigma^{n_{k+1}} \i)}S_{\i|_{n_{k+1}}}\right)\leq r< 2(A+C)\alpha_2\left(D_{\Pi(\sigma^{n_k} \i)}S_{\i|n_k}\right).
\end{equation}

Let $\varepsilon>0$. Let $r>0$ be sufficiently small so that $k \in \N$ that satisfies \eqref{k-thing2} is sufficiently large that \eqref{subsequence2} holds for $\varepsilon$. Therefore by using \eqref{lyap-alt}, \eqref{mu q bound 2} and \eqref{subsequence2} we get
\begin{align*}
&\frac{\log \mu\left(Q_2(\textbf{x}, r)\right)}{\log r}\\
&\leq \frac{\log \mu\left(Q_2\left(\textbf{x}, 2(A+C)\alpha_2\left(D_{\Pi(\sigma^{n_{k+1}} \i)}S_{\i|_{n_{k+1}}}\right)\right)\right)}{\log \left(2(A+C)\alpha_2\left(D_{\Pi(\sigma^{n_k} \i)}S_{\i|n_k}\right)\right)}\\
&\leq \frac{\log L^{-1}m([\i|_{n_{k+1}}]) + \log\pi(\mu)\left(Q_1\left(\pi(\Pi(\sigma^{n_{k+1}} \i)),  \frac{M^{-1}\alpha_2\left(D_{\Pi(\sigma^{n_{k+1}} \i)}S_{\i|_{n_{k+1}}}\right)}{\max_{\b \in [0,1]^2} \alpha_1\left(D_{{\bf b}}S_{\i|n_{k+1}}\right)}\right)\right)}{\log\left(2(A+C)\alpha_2\left(D_{\Pi(\sigma^{n_k} \i)}S_{\i|n_k}\right)\right)}\\
&\leq \frac{\log L^{-1}m([\i|_{n_{k+1}}]) + (t+\varepsilon)\log\left(\frac{M^{-1}\alpha_2\left(D_{\Pi(\sigma^{n_{k+1}} \i)}S_{\i|_{n_{k+1}}}\right)}{\max_{\b \in [0,1]^2} \alpha_1\left(D_{{\bf b}}S_{\i|n_{k+1}}\right)}\right)}{\log\left(2(A+C)\alpha_2\left(D_{\Pi(\sigma^{n_k} \i)}S_{\i|n_k}\right)\right)}\\
&=\frac{\frac{1}{n_{k+1}}\log L^{-1}+\frac{1}{n_{k+1}}\log m([\i|_{n_{k+1}}]) +\frac{(t+\varepsilon)}{n_{k+1}}\log\left(\frac{\alpha_2\left(D_{\Pi(\sigma^{n_{k+1}} \i)}S_{\i|_{n_{k+1}}}\right)}{\max_{\b \in [0,1]^2} \alpha_1\left(D_{{\bf b}}S_{\i|n_{k+1}}\right)}\right)+\frac{(t+\varepsilon)}{n_{k+1}}\log M^{-1}}{\frac{1}{n_{k+1}}\log 2(A+C) +\frac{1}{n_k}\frac{n_k}{n_{k+1}}\log \left(\alpha_2\left(D_{\Pi(\sigma^{n_k} \i)}S_{\i|n_k}\right)\right)}\\
&\rightarrow\frac{h(\mu)+(t+\varepsilon)(\chi_2(\mu)-\chi_1(\mu))}{\chi_2(\mu)}
\end{align*}
as $r\rightarrow 0$ (so $k\rightarrow\infty$). Since $\varepsilon> 0$ was arbitrary, the upper bound follows.
\end{proof}

\begin{samepage}

\subsection*{Acknowledgements}

LDL was supported by an \emph{EPSRC Doctoral Training Grant} (EP/N509759/1). NJ was supported by an \emph{EPSRC Standard Grant} (EP/R015104/1). The authors thank Kenneth Falconer and Jonathan Fraser for making several helpful comments on the manuscript.

\end{samepage}

\begin{bibdiv}
\begin{biblist}
	
\bib{bedford}{article}{
   author={Bedford, Tim},
   title={Applications of dynamical systems theory to fractals---a study of
   cookie-cutter Cantor sets},
   conference={
      title={Fractal geometry and analysis},
      address={Montreal, PQ},
      date={1989},
   },
   book={
      series={NATO Adv. Sci. Inst. Ser. C Math. Phys. Sci.},
      volume={346},
      publisher={Kluwer Acad. Publ., Dordrecht},
   },
   date={1991},
   pages={1--44},
}

\bib{bk}{article}{
   author={B\'{a}r\'{a}ny, Balazs},
   author={K\"{a}enm\'{a}ki, Antti},
   title={Ledrappier-Young formula and exact dimensionality
of self-affine measures},
journal={Advances in Mathematics},
  volume={318},
  pages={88--129},
  year={2017},
}

\bib{bkm}{article}{
   author={B\'{a}r\'{a}ny, Balazs},
   author={K\"{a}enm\'{a}ki, Antti},
   author={Morris, Ian},
   title={Domination, almost additivity, and thermodynamic formalism for planar matrix cocycles},
journal={Israel Journal of Mathematics (to appear)},
}

\bib{f techniques}{book}{
   author={Falconer, Kenneth}
   title={Techniques in Fractal Geometry},
   publisher={Wiley},
   year={1997}
}	
	
\bib{ffl}{article}{
   author={Falconer, Kenneth},
   author={Fraser, Jonathan},
   author={Lee, Lawrence},
   title={$L^q$ spectra of measures on planar non-conformal attractors},
journal={Ergodic Theory and Dynamical Systems (to appear)},
}

\bib{feng}{article}{
   author={Feng, D.-J.},
   title={Dimension of invariant measures for affine iterated function systems},
eprint={https://arxiv.org/abs/1901.01691}
}

\bib{fh}{article}{
   author={Feng, D.-J.},
   author={Hu, Huyi},
   title={Dimension theory of iterated function systems},
journal={Communications on Pure and Applied Mathematics: A Journal Issued by the Courant Institute of Mathematical Sciences},
  volume={62},
  number={11},
  pages={1435--1500},
  year={2009},
}

\bib{fs}{article}{
   author={Feng, D.-J.},
   author={Simon, Karoly},
   title={Dimension estimates for $C^1$ iterated function
systems and repellers. Part I},
journal={Preprint, available at https://arxiv.org/abs/2007.15320},
  year={2020},
}

\bib{fjj}{article}{
   author={Fraser, Jonathan},
   author={Jordan, Thomas},
   author={Jurga, Natalia},
   title={Dimensions of equilibrium measures
on a class of planar self-affine sets},
journal={Journal of Fractal Geometry},
  volume={7},
  number={1},
  pages={87--111},
  year={2020},
}

\bib{hu}{article}{
   author={Hu, H},
   title={Dimensions of invariant sets of expanding maps},
journal={Communications in Mathematical Physics},
  volume={176},
  pages={307-320},
  year={1996},
}

\bib{hut}{article}{
   author={Hutchinson, J.E.},
   title={Fractals and self-similarity},
journal={Indiana University Mathematics  Journal},
  volume={30},
  pages={713-747},
  year={1981},
}

\bib{ly1}{article}{
   author={Ledrappier, Francois},
   author={Young, Lai-Sang},
   title={The metric entropy of diffeomorphisms. I. Characterization of measures satisfying Pesin's entropy formula},
journal={Annals of Mathematics},
  volume={122},
  number={3},
  pages={509--539},
  year={1985},
}

\bib{ly2}{article}{
   author={Ledrappier, Francois},
   author={Young, Lai-Sang},
   title={The metric entropy of diffeomorphisms. II. Relations between entropy, exponents and dimension},
journal={Annals of Mathematics},
  volume={122},
  number={3},
  pages={540--574},
  year={1985},
}

\bib{pesin}{article}{
   author={Cao, Yongluo},
   author={Pesin, Yakov},
   author={Zhao, Yun},
   title={Dimension estimates for non-conformal repellers and continuity of
   sub-additive topological pressure},
   journal={Geometric and Functional Analysis},
   volume={29},
   date={2019},
   number={5},
   pages={1325--1368}
}

\end{biblist}
\end{bibdiv}	

Natalia Jurga, School of Mathematics \& Statistics, University of St Andrews, St Andrews, KY16 9SS, UK
\textit{E-mail address}:\ \url{naj1@st-andrews.ac.uk}

Lawrence D. Lee, School of Mathematics \& Statistics, University of St Andrews, St Andrews, KY16 9SS, UK
\textit{E-mail address}:\ \url{ldl@st-andrews.ac.uk}

\end{document}